\newtheoremstyle{standard}{10pt}{3pt}{\itshape}{}{\bfseries}{.}{.5em}{}
\theoremstyle{standard}
\newtheorem{lemma}{Lemma}[subsection]
\newtheorem{prop}[lemma]{Proposition}
\newtheorem{thm}[lemma]{Theorem}
\newtheorem{cor}[lemma]{Corollary}
\newtheorem*{hal}{Theorem}
\newtheoremstyle{definition}{10pt}{3pt}{}{}{\bfseries}{.}{.5em}{}
\theoremstyle{definition}
\newtheorem{defi}[lemma]{Definition}  
\newtheorem{ex}[lemma]{Example}
\newtheorem{rem}[lemma]{Remark}
\DeclareMathOperator{\Q}{\mathbf{Qcoh}}
\DeclareMathOperator{\Cat}{\mathbf{Cat}}
\DeclareMathOperator{\M}{\mathbf{Mod}}
\DeclareMathOperator{\Alg}{\mathbf{Alg}}
\DeclareMathOperator{\Sch}{\mathbf{Sch}}
\DeclareMathOperator{\Stack}{\mathbf{Stack}}
\DeclareMathOperator{\zRing}{\mathbf{2-Ring}}
\DeclareMathOperator{\Spec}{Spec}
\DeclareMathOperator{\SPEC}{\mathbf{Spec}}
\DeclareMathOperator{\supp}{supp}
\DeclareMathOperator{\End}{End}
\DeclareMathOperator{\END}{\underline{\End}}
\DeclareMathOperator{\id}{id}
\DeclareMathOperator*{\colim}{colim}
\DeclareMathOperator{\op}{op}
\DeclareMathOperator{\Hom}{Hom}
\DeclareMathOperator{\lax}{lax}
\renewcommand{\O}{\mathcal{O}}
\DeclareMathAlphabet\mathbfcal{OMS}{cmsy}{b}{n}
\newcommand{\OO}{\mathbfcal{O}}
\begin{document}

\title{Tensor functors between categories of quasi-coherent sheaves}
\author{Martin Brandenburg\footnote{WWU M\"unster, \texttt{brandenburg@uni-muenster.de}} ~and Alexandru Chirvasitu\footnote{University of California, Berkeley, \texttt{chirvasitua@math.berkeley.edu}}}
\date{{\small \today}}
\maketitle

\begin{abstract}
\noindent For a quasi-compact quasi-separated scheme $X$ and an arbitrary scheme $Y$ we show that the pullback construction $f \mapsto f^*$ implements an equivalence between the discrete category of morphisms $Y \to X$ and the category of cocontinuous tensor functors $\Q(X) \to \Q(Y)$. This is an improvement of a result by Lurie and may be interpreted as the statement that algebraic geometry is $2$-affine. Moreover, we prove the analogous version of this result for Durov's notion of generalized schemes over $\mathds{F}_1$.
\end{abstract}

\tableofcontents


\section{Introduction}

For every morphism of schemes $f : Y \to X$ we have an associated pullback functor $f^* : \Q(X) \to \Q(Y)$ between categories of quasi-coherent sheaves. It preserves direct sums, cokernels and tensor products, i.e. it is a \emph{cocontinuous symmetric monoidal functor}. In this paper, we are concerned with the converse: is every cocontinuous symmetric monoidal functor $F : \Q(X) \to \Q(Y)$ induced by a morphism $Y \to X$? Or more precisely, does $f\mapsto f^*$ implement an equivalence between $\Hom_{\Sch}(Y,X)$ (regarded as a discrete category) and the category $\Hom_{c\otimes}\bigl(\Q(X),\Q(Y)\bigr)$ of all cocontinuous symmetric monoidal functors and monoidal natural transformations? 

Apart from being interesting in its own right, this question arises naturally as part of the discussion on ``2-algebraic geometry" in \cite{ChJo}. In that paper, a notion of commutative 2-ring is introduced. These are symmetric monoidal categories satisfying some extra technical conditions; the important thing for us is that categories of the form $\Q(X)$ are examples of commutative 2-rings, and one would like to conclude that $X\mapsto \Q(X)$ is fully faithful, and hence one can recover 1-algebraic geometry as affine 2-algebraic geometry. 

The problem also arises in the course of Jacob Lurie's work on Tannaka reconstruction for stacks. \cite[Theorem 5.11]{Lur} solves a modified version of this problem (for so-called geometric stacks, rather than just schemes): instead of the full $\Hom_{c\otimes}\bigl(\Q(X),\Q(Y)\bigr)$, Lurie's result involves the subcategory whose objects are functors satisfying a technical condition referred to as \emph{tameness} (cf. \cite[Definition 5.9]{Lur}), and whose morphisms are monoidal {\it iso}morphisms. Essentially, tameness means preservation of faithful flatness. This is a rather awkward global condition which seems to be very hard to check, so it is tempting to try to avoid it. 

Our aim, then, is to answer the question affirmatively for schemes satisfying reasonable conditions, but with no additional assumptions on our symmetric monoidal functors. The main result of the paper (see \ref{main}) is:

\begin{hal}
Let $X$ be a quasi-compact quasi-separated scheme, and $Y$ an arbitrary scheme. Then, the functor $f\mapsto f^*$ implements an equivalence
\[\Hom(Y,X)\simeq \Hom_{c\otimes}\bigl(\Q(X),\Q(Y)\bigr).\]
\end{hal}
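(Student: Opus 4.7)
The plan is a reduction to the affine case on both sides, with the bulk of the work concentrated in translating a finite affine cover of $X$ into a corresponding open cover of $Y$ through the ideal sheaves of the complementary closed subsets.

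\textbf{Step 1 (affine--affine case).} When $X = \Spec A$ and $Y = \Spec B$, identify $\Q(X) \simeq A\text{-}\M$ and $\Q(Y) \simeq B\text{-}\M$. A cocontinuous tensor functor $F$ must send the unit $A$ to $B$; its action on $\End_A(A) = A$ then produces a ring homomorphism $A \to \End_B(B) = B$. Conversely, each $\phi : A \to B$ yields $\phi^* = B \otimes_A -$, and the two constructions are mutually inverse. A monoidal natural automorphism of $\phi^*$ is determined by its value on $A$, which is forced to be $\id_B$ by unit preservation, so the hom-category $\Hom_{c\otimes}(\Q(X),\Q(Y))$ is indeed discrete.

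\textbf{Step 2 (reducing $Y$ to affine).} For general $Y$, take an affine open cover $\{V_j\}$ and compose $F$ with the cocontinuous tensor localizations $\Q(Y) \to \Q(V_j)$ to obtain $F_j : \Q(X) \to \Q(V_j)$. Assuming the affine-$Y$ case, each $F_j$ arises from a unique $f_j : V_j \to X$. Uniqueness, applied to any affine refinement of the overlap $V_j \cap V_k$, forces $f_j$ and $f_k$ to agree there, so the $f_j$ glue to $f : Y \to X$ with $f^* \simeq F$.

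\textbf{Step 3 ($X$ qcqs, $Y = \Spec B$ affine).} Pick a finite affine cover $X = U_1 \cup \cdots \cup U_n$ with $U_i = \Spec A_i$, and let $\mathcal{I}_i \subset \O_X$ be a quasi-coherent ideal sheaf cutting out the complement $Z_i = X \setminus U_i$, so that $\sum_i \mathcal{I}_i = \O_X$. Let $\mathcal{J}_i$ be the image of $F(\mathcal{I}_i) \to F(\O_X) = \O_Y$. Since $F$ preserves colimits and epimorphisms, $\sum_i \mathcal{J}_i = \O_Y$, hence the principal opens $V_i = \Spec B \setminus V(\mathcal{J}_i)$ form an open cover of $\Spec B$. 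If $\mathcal{F}$ is supported on $Z_i$, it is a filtered colimit of subsheaves annihilated by powers of $\mathcal{I}_i$, so $F(\mathcal{F})$ is annihilated by the corresponding powers of $\mathcal{J}_i$ (using that $F$ preserves $\otimes$ and filtered colimits), and vanishes on $V_i$. Thus the composite $\Q(X) \to \Q(Y) \to \Q(V_i)$ kills the subcategory of sheaves supported on $Z_i$ and descends through the Gabriel quotient $\Q(X) \to \Q(U_i)$ to a cocontinuous tensor functor $\Q(U_i) \to \Q(V_i)$. Step 1 then delivers $V_i \to U_i \hookrightarrow X$; these glue, by uniqueness from Step 1, to a morphism $\Spec B \to X$ inducing $F$.

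\textbf{Main obstacle.} The crux lies in two technical points of Step 3: (a) that quasi-coherent sheaves supported on $Z_i$ can be written as filtered colimits of subsheaves killed by $\mathcal{I}_i$-powers, so that cocontinuity and monoidality of $F$ can propagate annihilation from $X$ to $Y$; and (b) that the localization $\Q(X) \to \Q(U_i)$ has a monoidal universal property, so that every cocontinuous tensor functor killing the supported subcategory factors uniquely through it compatibly with $\otimes$. Point (a) is where the qcqs hypothesis is indispensable: it permits a finite cover, guarantees $\sum \mathcal{I}_i = \O_X$, and lets one describe supported sheaves concretely via local ideal powers. Point (b) upgrades the standard abelian-categorical universal property of Gabriel quotients to a symmetric monoidal one, and must be established by a careful analysis of how $\otimes$ interacts with the quotient, after which Steps 1--2 absorb all remaining gluing.
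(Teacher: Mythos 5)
Your overall architecture differs from the paper's: you transport a finite affine cover of $X$ to an open cover of $Y$ via the images $\mathcal{J}_i$ of $F(\mathcal{I}_i)\to\O_Y$ and then try to descend each composite $\Q(X)\to\Q(V_i)$ through the localization $\Q(X)\to\Q(U_i)$, whereas the paper first reduces to $Y=\Spec(A)$ with $A$ \emph{local} and uses the maximal ideal to show that a single affine chart $U$ of $X$ ``survives'' (Lemma \ref{weak}), then proves locality of $F$ with respect to that one $U$. Your Steps 1 and 2 are fine and match the paper's Corollary \ref{afftens} and Remark \ref{fpqc}, and your point (a) is essentially correct on a qcqs scheme (choose $\mathcal{I}_i$ of finite type and use that every quasi-coherent module is the filtered colimit of its finite-type submodules). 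The covering idea is a genuine and attractive variant: it would let you bypass the reduction to local rings entirely.

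However, point (b) as you state it is a genuine gap, and you have mischaracterized its nature. The universal property of the Gabriel quotient $\Q(X)\to\Q(U_i)$ applies to \emph{exact} functors annihilating the Serre subcategory $\Q_{Z_i}(X)$; a cocontinuous tensor functor is only right exact, and for such a functor ``kills every object supported on $Z_i$'' does \emph{not} formally imply ``inverts every morphism that is an isomorphism over $U_i$''. The problematic case is a monomorphism $M\hookrightarrow N$ with $N/M$ supported on $Z_i$: right exactness only yields that $F(M)\to F(N)$ is an epimorphism, and no amount of massaging the abelian-categorical universal property will produce injectivity. This is precisely where the paper's real work lies, and the tensor structure must be used in an essential, non-formal way: Lemma \ref{ideal} upgrades ``epi'' to ``iso'' for ideal inclusions $I\hookrightarrow\O_X$ by viewing $F(I)$ as a non-unital algebra mapping onto the unit and extracting a unit element; Lemmas \ref{pairing} and \ref{iso} then reduce an arbitrary monomorphism $M\hookrightarrow N$ (via finite-type approximation and the ideal quotient $(M:N_i)$) to that case, producing the splittings needed to conclude. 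Your proposal correctly locates the crux but asserts it rather than proving it, and describes it as a routine monoidal upgrade of a standard localization statement when in fact the standard statement is simply unavailable for non-exact functors. To complete your argument you would need to import (or reprove) the analogues of Lemmas \ref{ideal}--\ref{iso}: note that once you know the composite $\Q(X)\to\Q(V_i)$ kills $\Q_{Z_i}(X)$, it is ``weakly $U_i$-local'' in the paper's sense by Lemma \ref{ideal}, and Lemma \ref{iso} (which does not use locality of the target) then gives $U_i$-locality, after which Corollary \ref{reduction} finishes; so your covering strategy is salvageable, but only by supplying exactly the machinery you left implicit.
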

 
This improves on the corresponding result on projective schemes, obtained in \cite{Br} by rather different methods. In the language of that paper, our result is that every quasi-compact quasi-separated scheme is tensorial. It is unclear whether or not the same is true for quasi-compact quasi-separated (algebraic) stacks, but certain partial results can be obtained (although we will not discuss stacks much in this paper); we mention, for example, that the result is not very difficult to prove for classifying stacks of finite groups. Recently, Daniel Schäppi (\cite{Sch1,Sch2}) has proven the corresponding equivalence if $X$ is an Adams stack (i.e. a geometric stack with the strong resolution property) and even classified those monoidal categories which are of the form $\Q(X)$ for some Adams stack $X$ using a suitable generalization of Tannakian categories.
 
The paper is organized as follows:

Section 2 reviews basics on monoidal categories and universal cocompletions. This will in particular settle Theorem \ref{main} in the special case when $X$ is affine. We also make some preparations concerning immersions. 

Section 3 contains the proof of our Theorem \ref{main}. It is broken down into two steps: First, $Y$ is (the spectrum of) a local ring, and finally, an arbitrary scheme. We remark that the proof is entirely elementary and self-contained, in the sense that it only uses algebraic geometry as developed in EGA I \cite{EGAI}. We also include a subsection outlining a general picture of what ``categorified algebraic geometry'' should look like, and how the results obtained in this paper fit into that framework. 

Finally, in Section 4 we argue that the main results and their proofs carry over to algebraic geometry in the context of generalized schemes as developed in \cite{Du}, with only minor modifications.

\subsection*{Acknowledgements:} We would like to thank Jacob Lurie and Daniel Sch\"appi for fruitful discussions on the contents of \cite{Lur} and \cite{Sch1,Sch2} respectively, as well as the anonymous referee for suggestions that have improved the substance and presentation of the paper.


\section{Preliminaries on tensor categories}


\subsection{Definition of tensor categories}

All rings and algebras under consideration are commutative and unital. A \emph{tensor category} is a category together with a tensor product which is unital, associative and symmetric up to compatible isomorphisms. These are called ACU $\otimes$-categories in (\cite[2.4]{SaR}), and are commonly known as symmetric monoidal categories (\cite[XI.1]{Mac}). In addition, tensor categories are assumed to be $R$-linear for some fixed ring $R$: this means that the underlying category is $R$-linear and the tensor product is $R$-linear in both variables (\cite[0.1.2]{SaR}). Tensor functors are understood to be strong, i.e. they respect the tensor structure up to a canonical \emph{iso}morphism (as in \cite[4.1.1, 4.2.4]{SaR}). For $R$-linear tensor categories $C,D$ we denote by $\Hom_{\otimes/R}(C,D)$ (or by $\Hom_{\otimes}(C,D)$ if $R$ is clear from the context) the category of all $R$-linear tensor functors $C \to D$. Morphisms in this category are tensor natural transformations, that is natural transformations which are compatible with the tensor structure (\cite[4.4.1]{SaR}). Thus we obtain the 2-category of $R$-linear tensor categories. The unit of a tensor category $C$ is usually denoted by $1_C$. The a priori noncommutative $R$-algebra $\End(1_C)$ turns out to be commutative by a variation of the Eckmann-Hilton argument (\cite[1.3.3.1]{SaR}). $\Alg(R)$ denotes the category of $R$-algebras, and $\M(R)$ that of \emph{right} $R$-modules.
  
By a \emph{cocomplete} tensor category we mean one whose underlying category is cocomplete (i.e. has all small colimits), and such that the tensor product is cocontinuous in each variable. This means that for all objects $X$ and all small diagrams $\{Y_i\}$, the canonical morphism
\[\colim\limits_i (X \otimes Y_i) \to X \otimes \colim\limits_i Y_i\]
is an isomorphism, and similarly for the other variable (this also follows by symmetry). For discrete diagrams this is just the categorified distributive law
\[\bigoplus_i (X \otimes Y_i) = X \otimes \bigoplus_i Y_i.\]
Therefore, we can think of $R$-linear cocomplete tensor categories as categorified $R$-algebras and might call them $R$-2-algebras. In fact, Chirvasitu and Johnson-Freyd call them $2$-rings (\cite[2.3.1]{ChJo}), dropping the $\M(R)$-enrichment and assuming presentability of the underlying category.
  
If $S$ is an $R$-algebra, then $\M(S)$ is an $R$-linear cocomplete tensor category. The tensor product is the usual tensor product of $S$-modules and the unit is $S$. More generally, if $X$ is an $R$-scheme, then its category of quasi-coherent modules $\Q(X)$ is an $R$-linear cocomplete tensor category with tensor product $\otimes_X$ and unit $\O_X$. This is our main example. Various reconstruction theorems such as the classical one by Gabriel for noetherian schemes (\cite{Gab}), by Rosenberg for quasi-separated schemes (\cite{Ros},\cite{Br2}) and recently by Lurie for geometric stacks (\cite{Lur}) suggest that all the information of (a nice) $X$ is already encoded in this $2$-algebra $\Q(X)$ and therefore we can think of usual algebraic geometry as $2$-affine (see also \cite[1.2]{ChJo}).
 
If $C,D$ are $R$-linear cocomplete tensor categories, we denote by $\Hom_{c\otimes/R}(C,D)$ the category of all cocontinuous $R$-linear tensor functors $C \to D$; if $C,D$ are just cocomplete categories, we denote by $\Hom_{c}(C,D)$ the category of cocontinuous functors. For example, every morphism $f : Y \to X$ of $R$-schemes induces a cocontinuous $R$-linear tensor functor $f^* : \Q(X) \to \Q(Y)$. As we have already mentioned in the introduction, the main purpose of this paper is to show that every cocontinuous tensor functor arises in this way.


\subsection{Universal cocompletion}

Fix a small $R$-linear category $C$. The category of presheaves
\[\widehat{C} := \Hom_{/R}\bigl(C^{\op},\M(R)\bigr)\]
is cocomplete and we have the Yoneda embedding $C \to \widehat{C}$. In fact, it is the universal cocompletion of $C$:

\begin{prop}
For a cocomplete $R$-linear category $D$, the Yoneda embedding induces an equivalence of categories $\Hom_{c/R}\bigl(\widehat{C},D\bigr) \simeq \Hom_{/R}(C,D)$.
\end{prop}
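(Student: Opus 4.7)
The plan is to produce an inverse to the restriction functor $y^* \colon \Hom_{c/R}(\widehat{C}, D) \to \Hom_{/R}(C, D)$ along the Yoneda embedding $y \colon C \to \widehat{C}$. Given an $R$-linear functor $F \colon C \to D$, I would define its candidate extension $\widehat{F} \colon \widehat{C} \to D$ by the coend
\[\widehat{F}(X) \;=\; \int^{c \in C} X(c) \otimes_R F(c),\]
where $\otimes_R \colon \M(R) \times D \to D$ is the canonical tensoring of $D$ over $R$-modules coming from its $R$-linear cocomplete structure: for $M \in \M(R)$ with a free presentation $R^{(J)} \to R^{(I)} \to M \to 0$, set $M \otimes_R d := \mathrm{coker}\bigl(d^{(J)} \to d^{(I)}\bigr)$. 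An equivalent and cleaner way to produce the same $\widehat{F}$ is to display it as the left adjoint of the $R$-linear functor $D \to \widehat{C}$ sending $d$ to the presheaf $c \mapsto \Hom_D(F(c), d)$; this adjoint viewpoint makes cocontinuity of $\widehat{F}$ automatic and the verification of $R$-linearity transparent.

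The crux is the $R$-linear co-Yoneda identity: every presheaf $X \in \widehat{C}$ is canonically a coend of representables,
\[X \;\cong\; \int^{c} X(c) \otimes_R y(c).\]
This has two immediate consequences. First, taking $X = y(c')$ and using the classical Yoneda lemma $y(c')(c) = \Hom_C(c, c')$, one obtains $\widehat{F}(y(c')) \cong F(c')$ naturally in $c'$, so $\widehat{F}$ genuinely extends $F$. Second, if $G \colon \widehat{C} \to D$ is any cocontinuous $R$-linear extension of $F$, then pulling the coend presentation of $X$ through $G$ yields $G(X) \cong \int^{c} X(c) \otimes_R F(c) = \widehat{F}(X)$, which shows that the extension is unique up to canonical isomorphism; this gives essential surjectivity of $y^*$. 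Full faithfulness works in exactly the same way: a natural transformation $\widehat{F}_1 \to \widehat{F}_2$ between cocontinuous $R$-linear functors is determined by, and can be reconstructed from, its restriction along $y$, precisely because $\widehat{C}$ is generated under colimits by representables and both sides commute with those colimits.

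The main technical obstacle is the $R$-linear co-Yoneda identity itself, namely that the canonical morphism $\int^{c} X(c) \otimes_R y(c) \to X$ is an isomorphism. I would check this pointwise in $\widehat{C}$: evaluation at a fixed $c' \in C$ is cocontinuous and therefore commutes with the coend, reducing the claim to the $\M(R)$-enriched statement $X(c') \cong \int^{c} X(c) \otimes_R \Hom_C(c, c')$, which follows from the universal property of $\otimes_R$ combined with the ordinary Yoneda lemma. The remaining bookkeeping—functoriality of $F \mapsto \widehat{F}$ and verification of the unit/counit of the putative adjoint equivalence—is then formal from the universal property of coends.
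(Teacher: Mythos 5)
Your argument is correct and is essentially the proof the paper delegates to Kelly: you construct the left Kan extension along the Yoneda embedding as an enriched coend (equivalently, as the left adjoint of the nerve functor $d \mapsto \Hom_D(F(-),d)$), and the crux you identify---the $R$-linear co-Yoneda identity exhibiting every presheaf as a canonical colimit of representables---is exactly the crux the paper names. The only point worth making explicit is that a cocontinuous $R$-linear functor automatically preserves the copowers $M \otimes_R (-)$, since these are built from coproducts and cokernels; with that observed, your uniqueness and full-faithfulness steps go through as stated.
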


\begin{proof}
See \cite[4.4]{Kel1} for a proof in the context of general enriched categories. The crux is that every presheaf is a canonical colimit of representable functors (\cite[III.7]{Mac}).
\end{proof}

If $C$ is a small $R$-linear tensor category, then we may extend the tensor product $C \times C \to C$ to a bicocontinuous functor $\widehat{C} \times \widehat{C} \to \widehat{C}$ (called \emph{Day convolution}); just use the universal property of $\widehat{C}$ in both factors. Then, $\widehat{C}$ becomes a cocomplete $R$-linear category and in fact, has the following universal property:

\begin{prop}
For a cocomplete $R$-linear tensor category $D$, the Yoneda embedding induces an equivalence of categories
\[\Hom_{c\otimes/R}\bigl(\widehat{C},D\bigr) \simeq \Hom_{\otimes/R}(C,D).\]
\end{prop}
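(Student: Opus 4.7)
The plan is to bootstrap from the previous proposition: restriction along the Yoneda embedding $y : C \to \widehat{C}$ already gives an equivalence between cocontinuous $R$-linear functors $\widehat{C} \to D$ and $R$-linear functors $C \to D$, so the only thing left to check is that this equivalence respects the extra tensor data on both sides. The two directions will be handled separately.

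For the restriction direction, if $\widehat{F} : \widehat{C} \to D$ is a cocontinuous $R$-linear tensor functor, then $y$ is a tensor functor by the very construction of the tensor product on $\widehat{C}$, so the composite $F = \widehat{F} \circ y : C \to D$ is an $R$-linear tensor functor, and any tensor natural transformation $\widehat{F} \Rightarrow \widehat{G}$ restricts to one $F \Rightarrow G$. For the extension direction, starting from an $R$-linear tensor functor $F : C \to D$, I would first apply the previous proposition to obtain a cocontinuous $R$-linear extension $\widehat{F} : \widehat{C} \to D$, and then equip it with a tensor structure. The unit constraint is immediate from $1_{\widehat{C}} = y(1_C)$ and $\widehat{F}(y(1_C)) \cong F(1_C) \cong 1_D$. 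For the binary constraint, given $P, Q \in \widehat{C}$, write each as a canonical colimit of representables, $P = \colim_i y(c_i)$ and $Q = \colim_j y(c'_j)$. Using bicocontinuity of $\otimes$ on $\widehat{C}$, compatibility of $y$ with the tensor product, cocontinuity of $\widehat{F}$, monoidality of $F$, and bicocontinuity of $\otimes$ on $D$, we obtain a chain of isomorphisms
\[\widehat{F}(P \otimes Q) \cong \colim_{i,j} \widehat{F}(y(c_i \otimes c'_j)) \cong \colim_{i,j} F(c_i) \otimes F(c'_j) \cong \widehat{F}(P) \otimes \widehat{F}(Q),\]
which supplies the required structure isomorphism.

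The coherence axioms (associativity, unitality, symmetry) for $\widehat{F}$ all reduce, by naturality and cocontinuity in each variable, to the corresponding axioms for $F$ on representables, which hold by assumption. Similarly, given a tensor natural transformation $\alpha : F \Rightarrow G$, extend it to $\widehat{\alpha} : \widehat{F} \Rightarrow \widehat{G}$ via the previous proposition; compatibility of $\widehat{\alpha}$ with the tensor structures amounts to a commuting square that becomes a colimit of squares of the form expressing tensor-compatibility of $\alpha$ on representables, and so holds on all of $\widehat{C}$.

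The main obstacle is not conceptual but bookkeeping: one has to verify that the structure isomorphism defined above is natural in both variables and that all coherence diagrams (pentagon, triangle, hexagon) commute after extension. This is the standard Day-convolution argument, and may be handled either by a direct check or by appealing to the enriched symmetric monoidal variant of Kelly's universal property of the presheaf category (\cite[\S 6]{Kel1}).
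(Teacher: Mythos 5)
Your proposal is correct and follows essentially the same route as the paper, which simply declares the result a direct consequence of the previous proposition and refers to Im--Kelly for the enriched monoidal (Day convolution) details; you have merely written out the standard argument that the paper delegates to that reference. The one point you rightly flag as ``bookkeeping''---naturality of the structure isomorphism and the coherence/round-trip checks---is exactly what the citation to \cite{ImKe} is covering.
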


\begin{proof}
As sketched above, this is a direct consequence of the previous proposition. See \cite{ImKe} for a proof for general enriched tensor categories.
\end{proof}

\begin{prop} \label{modtens}
Let $A$ be an $R$-algebra. Then, $\M(A)$ is an $R$-linear cocomplete tensor category satisfying the following universal property: for every $R$-linear cocomplete tensor category $C$, we have an equivalence of categories
\[\Hom_{c\otimes/R}\bigl(\M(A),C\bigr) \simeq \Hom_{\Alg(R)}\bigl(A,\End(1_C)\bigr)\]
\end{prop}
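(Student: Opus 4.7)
The plan is to reduce the statement to the universal property of the previous proposition by realizing $A$ as a one-object $R$-linear tensor category. Let $BA$ denote the $R$-linear category with a single object $\ast$ and $\Hom(\ast,\ast)=A$, with composition given by multiplication in $A$. Since $A$ is commutative, $BA$ carries a symmetric monoidal structure whose tensor product on objects is the unique choice $\ast \otimes \ast = \ast$ and whose tensor product on morphisms is multiplication; the unit is $\ast$. Moreover, the presheaf category $\widehat{BA} = \Hom_{/R}(BA^{\op},\M(R))$ is canonically identified with $\M(A)$, because an $R$-linear functor $BA^{\op}\to \M(R)$ is precisely an $R$-module equipped with a right action of $A$, and the Yoneda embedding sends $\ast$ to $A$, which is the monoidal unit of $\M(A)$.

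First I would verify that this identification of $\widehat{BA}$ with $\M(A)$ matches the tensor structures: the bicocontinuous extension of $\otimes$ on $BA$ to $\widehat{BA}$ (as defined in the passage preceding the previous proposition) must be shown to coincide with the usual tensor product $\otimes_A$ on $\M(A)$. This is a routine check: both tensor products are bicocontinuous, both restrict to the tensor product of $BA$ on representables, and such a bicocontinuous extension is unique by the universal property of cocompletion applied in each variable separately.

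Next, invoking the previous proposition, we obtain an equivalence
\[\Hom_{c\otimes/R}\bigl(\M(A),C\bigr) \simeq \Hom_{\otimes/R}(BA,C).\]
It then remains to analyze $R$-linear tensor functors $F : BA \to C$ and their monoidal natural transformations. Since $F$ is strong monoidal and unital, the structure isomorphism forces $F(\ast)$ to be canonically isomorphic to $1_C$; after transport along this isomorphism we may assume $F(\ast)=1_C$ strictly. The action on morphisms then gives an $R$-linear map $A = \End_{BA}(\ast) \to \End_C(1_C)$ which is multiplicative and unit-preserving because $F$ is a functor, and compatible with the tensor product because $F$ is monoidal --- so it is an $R$-algebra homomorphism. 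Conversely, any such algebra map produces a monoidal functor $BA \to C$.

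The main obstacle, and the step that has to be done with care, is matching the morphism spaces on both sides: the right-hand side is a discrete category of algebra maps, so I must check that any monoidal natural transformation between tensor functors $BA\to C$ is an identity. This follows because, after the above normalization, a monoidal natural transformation $F\Rightarrow G$ consists of a single morphism $1_C \to 1_C$ commuting with the unit constraints, and the unit axiom for a monoidal transformation forces this morphism to be $\id_{1_C}$. Once this is settled, the composite equivalence $\Hom_{c\otimes/R}(\M(A),C)\simeq \Hom_{\Alg(R)}(A,\End(1_C))$ is established, and its pseudo-naturality in $C$ is automatic from the construction.
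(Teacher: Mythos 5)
Your proposal is correct and follows essentially the same route as the paper's own proof: realize $A$ as a one-object $R$-linear tensor category, identify its universal cocompletion with $\M(A)$, and apply the preceding proposition. You simply spell out the details (matching of tensor structures, discreteness of the functor category) that the paper leaves implicit.
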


Here the right hand side is a set considered as a discrete category.

\begin{proof}
We may consider $A$ as an $R$-linear tensor category with just one object and endomorphism algebra $A$. The tensor product of endomorphisms is the multiplication of $A$. Then, $R$-linear tensor functors $A \to C$ correspond to $R$-algebra homomorphisms $A \to \End(1_C)$. Unpacking the definition of $\widehat{A}$, it is clear that it is exactly $\M(A)$, with its usual tensor structure. Now apply the previous proposition.
\end{proof}

\begin{cor} \label{afftens}
Let $X,Y$ be $R$-schemes, where $X$ is is affine. Then, $f \mapsto f^*$ is an equivalence of categories $\Hom_R(Y,X) \simeq \Hom_{c\otimes/R}\bigl(\Q(X),\Q(Y)\bigr)$.
\end{cor}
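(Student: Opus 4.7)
The strategy is to reduce to Proposition \ref{modtens} via the standard affine/module correspondence, and then to the universal property of affine schemes from EGA I.

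Write $X=\Spec(A)$ for some $R$-algebra $A$. The first step is to identify $\Q(X)$ with $\M(A)$ as $R$-linear cocomplete tensor categories, via the familiar equivalence $M\mapsto \widetilde{M}$ sending an $A$-module to the associated quasi-coherent sheaf on $\Spec(A)$; this sends $A\mapsto \O_X$ and intertwines $\otimes_A$ with $\otimes_X$. Substituting, one needs an equivalence
\[
\Hom_R(Y,\Spec A)\simeq \Hom_{c\otimes/R}\bigl(\M(A),\Q(Y)\bigr).
\]

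For the right-hand side, Proposition \ref{modtens} applied to $C=\Q(Y)$ gives
\[
\Hom_{c\otimes/R}\bigl(\M(A),\Q(Y)\bigr)\simeq \Hom_{\Alg(R)}\bigl(A,\End(1_{\Q(Y)})\bigr).
\]
Since the unit of $\Q(Y)$ is $\O_Y$, we have $\End(1_{\Q(Y)})=\End_{\O_Y}(\O_Y)=\Gamma(Y,\O_Y)$ as $R$-algebras. On the other hand, the universal property of $\Spec$ (cf.\ EGA I) identifies $\Hom_R(Y,\Spec A)$ with $\Hom_{\Alg(R)}(A,\Gamma(Y,\O_Y))$. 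Composing these three equivalences yields the asserted bijection on objects (and the fact that both Hom-categories on the two extremes are discrete, the latter because $\Hom_{\Alg(R)}$ is a set).

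The remaining point is to verify that this composite really is the functor $f\mapsto f^*$. Chasing through: a morphism $f\colon Y\to \Spec A$ corresponds to the ring map $f^{\#}\colon A\to \Gamma(Y,\O_Y)$, which in turn corresponds under Proposition \ref{modtens} to the unique (up to unique iso) cocontinuous tensor functor $\M(A)\to \Q(Y)$ sending $A\mapsto \O_Y$ and acting on $\End(A)=A$ via $f^{\#}$. But $f^*\colon \Q(\Spec A)\to \Q(Y)$ does precisely this: it sends $\O_{\Spec A}$ to $\O_Y$, is cocontinuous and symmetric monoidal, and its action on global sections of $\O_{\Spec A}=\widetilde{A}$ is $f^{\#}$. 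By the uniqueness built into Proposition \ref{modtens} the two functors agree up to a canonical monoidal isomorphism, which is exactly what is needed for the equivalence of categories. There is no real obstacle here; the only mildly delicate point is this last bookkeeping step showing that the abstract equivalence from Proposition \ref{modtens} coincides with the concrete $f\mapsto f^*$.
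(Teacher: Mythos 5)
Your proof is correct and follows essentially the same route as the paper's: both reduce via the equivalence $\Q(\Spec A)\simeq\M(A)$ to Proposition \ref{modtens}, identify $\End(1_{\Q(Y)})$ with $\Gamma(Y,\O_Y)$, invoke the universal property of $\Spec$, and check that the composite is $f\mapsto f^*$. Your write-up merely spells out the final compatibility check that the paper dismisses with ``it follows from the constructions.''
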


\begin{proof}
Let $X = \Spec(A)$ for some $R$-algebra $A$. Then we have an equivalence
\begin{multline*}
\Hom_R(Y,X)  \simeq  \Hom_{\Alg(R)}\bigl(A,\Gamma(\O_Y)\bigr) = \Hom_{\Alg(R)}\bigl(A,\End(\O_Y)\bigr)\\
\simeq \Hom_{c\otimes/R}\bigl(\M(A),\Q(Y)\bigr) \simeq \Hom_{c\otimes/R}\bigl(\Q(X),\Q(Y)\bigr).
\end{multline*}
It follows from the constructions that this composition is exactly $f \mapsto f^*$.
\end{proof}


\subsection{Immersions}

\begin{defi}
Recall that a \textit{lax tensor functor} $F : D \to E$ is a functor equipped with morphisms $u_F : 1_E \to F(1_D)$ and $(c_F)_{X,Y} : F(X) \otimes F(Y) \to F(X \otimes Y)$ for $X,Y \in D$, which are compatible in a suitable sense (\cite[XI.2]{Mac}). Thus, in contrast to a (strong) tensor functor, we don't require these morphisms to be invertible. An \textit{oplax tensor functor} $F : D \to E$ is just a lax tensor functor $F : D^{\op} \to E^{\op}$, which means that $F$ comes equipped with natural morphisms $F(1_D) \to 1_E$ and $F(X \otimes Y) \to F(X) \otimes F(Y)$. Together with tensor natural transformations the lax tensor functors constitute a category $\Hom_{\lax}(D,E)$.
\end{defi}

\begin{rem}
A typical example arises from adjunctions: Assume we are given a tensor functor $G : E \to D$, whose underlying functor has a right adjoint $F : D \to E$, with unit $\eta : \id_E \to FG$ and counit $\varepsilon : GF \to \id_D$. Then $F$ becomes a lax tensor functor as follows: We define $u_F$ as the composition
\[\xymatrix{1_E \ar[r]^-{\eta} & F(G(1_E)) \ar[r]^-{u_G^{-1}} & F(1_D)}\]
and define $c_F$ as the composition
\[\xymatrix@C=14pt{F(X) \otimes F(Y) \ar[r]^-{\eta} & F(G(F(X) \otimes F(Y))) \ar[r]^-{c_G^{-1}} & F(G(F(X)) \otimes G(F(Y))) \ar[r]^-{\varepsilon \otimes \varepsilon} & F(X \otimes Y).}\]
\end{rem}

\begin{lemma} \label{laxadjoint}
In the above situation, $F$ is indeed a lax tensor functor. Besides, the unit $\eta : \id_E \to FG$ as well as the counit $\varepsilon : GF \to \id_D$ are tensor natural transformations.
\end{lemma}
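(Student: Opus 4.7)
The statement is a classical instance of Kelly's \emph{doctrinal adjunction}. The plan is to prove it by direct diagram chasing, using only naturality of $\eta$, $\epsilon$, $c_G$ (and its inverse), the triangle identities $\epsilon G \circ G\eta = \id$ and $F\epsilon \circ \eta F = \id$, and the fact that $G$ satisfies the strong tensor coherence axioms (so in particular $c_G$ and $u_G$ are invertible and their inverses satisfy the dual coherence).

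First I would verify that $(F, u_F, c_F)$ is a lax tensor functor. The conceptually cleanest route is to observe that $u_F$ and $c_F$ are the \emph{mates} of $u_G^{-1}$ and $c_G^{-1}$ under the adjunction $G \dashv F$: applying $G$ to $c_F$ and post-composing with $\epsilon_{X \otimes Y}$ reduces, by naturality of $\epsilon$ and one triangle identity, to $(\epsilon_X \otimes \epsilon_Y) \circ (c_G)^{-1}_{F(X), F(Y)}$, and similarly $G(u_F) \circ \epsilon_{1_D} = u_G^{-1}$. Under this mate correspondence, each coherence axiom for $(F, u_F, c_F)$ (associativity, left and right unitality, and symmetry) translates into an equation of maps with codomain an object of $D$, and these translated equations follow from the corresponding axioms for $c_G, u_G$ combined with naturality. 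Since the adjoint transpose is a bijection, the original axioms hold. Alternatively one can unwind the definitions and chase the associativity diagram directly; each cell of the resulting commutative diagram is either a naturality square or an instance of a coherence axiom for $G$.

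For the tensor-naturality of $\eta$, the identity to check is
\[
F((c_G)_{X,Y}) \circ c_F(G(X), G(Y)) \circ (\eta_X \otimes \eta_Y) = \eta_{X \otimes Y},
\]
together with the analogous unit equation. I would expand the definition of $c_F$ at $(G(X), G(Y))$, then use naturality of $\eta$ to rewrite $\eta_{F(G(X)) \otimes F(G(Y))} \circ (\eta_X \otimes \eta_Y)$ as $FG(\eta_X \otimes \eta_Y) \circ \eta_{X \otimes Y}$, and naturality of $c_G^{-1}$ to slide $G(\eta_X \otimes \eta_Y)$ through $c_G^{-1}$. The triangle identity $\epsilon_{G(X)} \circ G(\eta_X) = \id_{G(X)}$ then collapses the middle of the expression, leaving $F(c_G \circ c_G^{-1}) \circ \eta_{X \otimes Y} = \eta_{X \otimes Y}$. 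The unit axiom for $\eta$ is analogous, using the definition of $u_F$ together with the triangle identity at $1_E$. The tensor-naturality of $\epsilon$ is dual: substitute the defining formula for $c_F$ into the required identity and use $F(\epsilon_X) \circ \eta_{F(X)} = \id_{F(X)}$ to eliminate the $\eta$ that entered through the definition of $c_F$.

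The only real obstacle is bookkeeping: every individual step is a naturality square or a triangle identity, but the full associativity diagram for $c_F$ has many cells and is tedious to display. The mate-correspondence viewpoint is what makes the argument short conceptually, and in practice one typically either suppresses the diagram chase or carries out only one representative axiom, as is standard in the literature on doctrinal adjunction.
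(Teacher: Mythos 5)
Your proposal is correct and takes the same route as the paper: the paper's entire proof is the remark that this is an exercise in diagram chasing and a special case of Kelly's theorem on doctrinal adjunctions, which is precisely the mate-correspondence argument you outline (you in fact supply more detail than the paper does). The only nitpick is a composition-order slip in ``$G(u_F) \circ \epsilon_{1_D} = u_G^{-1}$'', which should read $\epsilon_{1_D} \circ G(u_F) = u_G^{-1}$.
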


\begin{proof}
This is an exercise in diagram chasing. On the other hand, it is a special case of a Theorem by Kelly about doctrinal adjunctions (\cite[1.4]{Kel2}).
\end{proof}

\begin{ex} \label{directimage}
If $f : Y \to X$ is a quasi-compact, quasi-separated morphism, then the direct image functor $f_*$ preserves quasi-coherence (\cite[6.7.1]{EGAI}), thus restricts to a functor $f_* : \Q(Y) \to \Q(X)$ which is right adjoint to the tensor functor $f^* : \Q(X) \to \Q(Y)$. By the discussion above, $f_*$ becomes a lax tensor functor.

In the special case that $i : Y \hookrightarrow X$ is a quasi-compact immersion of schemes (for example a closed immersion or a quasi-compact open immersion), observe that $i_*$ is fully faithful, which implies that the counit $i^* i_* \to \id$ is an isomorphism. The following proposition deals with this kind of situation and will become important later: 
\end{ex}

\begin{defi} \label{localdef}
Let $C,D,E$ be tensor categories and $i = (i^*,i_*,\eta,\varepsilon)$ an adjunction, where $i^* : E \to D$ is a tensor functor and, as above, $i_* : D \to E$ becomes a lax tensor functor. Assume that the counit $\varepsilon : i^* i_* \to \id_D$ is an isomorphism. A functor $F : E \to C$ is called \emph{$i$-local} if $F \eta : F \to F i_* i^*$ is an isomorphism. If the adjunction is induced by a quasi-compact immersion $i : Y \hookrightarrow X$ as above, we also say \emph{$Y$-local} instead of $i$-local.
\end{defi}
 
\begin{prop}
In the situation of Definition \ref{localdef} we have: 
\begin{enumerate}
 \item $F$ is $i$-local if and only if $F$ maps every morphism $\phi$, such that $i^* \phi$ is an isomorphism, to an isomorphism.  
 \item For every lax tensor functor $G : D \to C$ the lax tensor functor $G i^* : E \to C$ is $i$-local. Conversely, to every $i$-local lax tensor functor $F : E \to C$ we may associate the lax tensor functor $F i_* : D \to C$. This establishes an equivalence of categories
\[\Hom_{\lax}(D,C) \cong \{F \in \Hom_{\lax}(E,C)  \text{ $i$-local}\}.\]
\item This restricts to an equivalence of categories
\[\Hom_{\otimes}(D,C) \cong \{F \in \Hom_{\otimes}(E,C)  \text{ $i$-local}\}.\]
\item If $C,D,E$ are cocomplete tensor categories, then it even restricts to an equivalence of categories
\[\Hom_{c\otimes}(D,C) \cong \{F \in \Hom_{c\otimes}(E,C)  \text{ $i$-local}\}.\]
\end{enumerate}
\end{prop}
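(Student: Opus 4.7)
The plan is to exploit the hypothesis that $\epsilon$ is an isomorphism in order to reduce every verification to an elementary check about $i^*$. First I would observe that the triangle identities $\epsilon_{i^*} \circ i^* \eta = \id_{i^*}$ and $i_* \epsilon \circ \eta_{i_*} = \id_{i_*}$, combined with invertibility of $\epsilon$, force $i^* \eta$ and $\eta_{i_*}$ to be isomorphisms (so in particular $i_*$ is fully faithful). For part 1, one direction is immediate from $i^* \eta$ being an iso; for the converse, I would exploit the naturality square of $\eta$ at any $\phi$ with $i^* \phi$ iso, in which three of the four edges become isomorphisms after applying $F$, forcing the fourth to be one as well.

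For part 2, I would take $G \mapsto G i^*$ and $F \mapsto F i_*$ as the candidate pseudo-inverses. The functor $G i^*$ is $i$-local since $i^* \eta$ is an iso. The witnessing natural isomorphisms are $G \epsilon : G i^* i_* \Rightarrow G$ (iso because $\epsilon$ is) and $F \eta : F \Rightarrow F i_* i^*$ (iso by $i$-locality); both are tensor natural by Lemma \ref{laxadjoint}, so the entire construction takes place inside the category of lax tensor functors and tensor transformations. Functoriality and the interchange relations are bookkeeping.

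For part 3, one direction is trivial since $G i^*$ is a composition of strong tensor functors. The other direction asks me to show $F i_*$ is strong when $F$ is strong and $i$-local. Its lax structure maps are built from $F$'s (invertible) structure maps together with $u_{i_*}$ and $c_{i_*}$, so it suffices to show that $F$ inverts $u_{i_*}$ and $c_{i_*}$, and by part 1 that $i^* u_{i_*}$ and $i^* c_{i_*}$ are isomorphisms. Unpacking the construction of $u_{i_*}$ and $c_{i_*}$ from the remark preceding Lemma \ref{laxadjoint}, these become compositions of the known isomorphisms $c_{i^*}$, $u_{i^*}$, $i^* \eta$, $\epsilon$, and $\epsilon \otimes \epsilon$; equivalently, this is exactly the content of Lemma \ref{laxadjoint} asserting tensor naturality of $\epsilon$.

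For part 4, I would use that $i^*$ is cocontinuous as a left adjoint. Applying $i^*$ to the canonical comparison morphism $\colim_j i_* X_j \to i_* \colim_j X_j$ in $E$, cocontinuity of $i^*$ together with invertibility of $\epsilon$ turns both sides into $\colim_j X_j$ and the map into an isomorphism; by part 1, $F$ inverts the comparison, and since $F$ itself is cocontinuous it follows by composition that $\colim_j F i_* X_j \to F i_* \colim_j X_j$ is an isomorphism, so $F i_*$ is cocontinuous. The main obstacle I expect is the compatibility calculation in part 3 identifying $i^* c_{i_*}$ as an isomorphism conjugate to $\epsilon \otimes \epsilon$; once that is in hand, parts 3 and 4 both become essentially formal consequences of part 1.
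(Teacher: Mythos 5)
Your proposal is correct and follows essentially the same route as the paper's proof: the triangle identities plus invertibility of $\epsilon$ give that $i^*\eta$ is an isomorphism, part 1 follows from the naturality square of $\eta$, parts 2--4 use the pseudo-inverses $-\circ i^*$ and $-\circ i_*$ with witnessing isomorphisms $G\epsilon$ and $F\eta$, and the strong/cocontinuous upgrades reduce via part 1 to checking that $i^*$ inverts the relevant structure and comparison maps. The only difference is cosmetic: you unpack why $i^*c_{i_*}$ is invertible slightly more explicitly than the paper, which simply asserts it.
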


\begin{proof}
$1.$ First note that $i^* \eta : i^* \to i^* i_* i^*$ is an isomorphism, since it is right inverse to the isomorphism $\varepsilon i^*$. This already shows one direction. Now assume that $F$ is $i$-local and consider a morphism $\phi : M \to N$ in $E$ such that $i^* \phi$ is an isomorphism. The naturality of $\eta$ with respect to $\phi$ yields the commutative diagram
\[\xymatrix@C=35pt{F(M) \ar[r]^{F(\phi)} \ar[d]_{F(\eta_M)} & F(N) \ar[d]^{F(\eta_N)} \\ F(i_* i^* M) \ar[r]_{F (i_* i^* \phi)} & F(i_* i^* N).}\]
Since $F$ is $i$-local, the vertical arrows are isomorphisms. The bottom arrow is an isomorphism since $i^* \phi$ is one. Thus also the top arrow $F(\phi)$ is an isomorphism.

$2.$ Let $G : D \to C$ be a lax tensor functor. Since $i^* \eta$ is an isomorphism, we see that $G i^* : E \to C$ is $i$-local. Conversely, if $F : E \to C$ is an $i$-local lax tensor functor, then $F i_* : D \to C$ is a lax tensor functor. Of course, the same works for tensor natural transformations. We obtain functors
\[\xymatrix@C=40pt{\Hom_{\lax}(D,C) \ar@<1.2ex>[r]^-{- \circ i^*}  &  \ar@<1.2ex>[l]^-{- \circ i_*} \{F \in \Hom_{\lax}(E,C)  \text{ $i$-local}\}.}\]
Let us show that they are pseudo-inverse to each other. Given a lax tensor functor $G : D \to C$ we have a natural isomorphism of functors $G \varepsilon : G i^* i_* \to G$, which is even a tensor natural transformation by Lemma \ref{laxadjoint}. Similarly, for a $i$-local lax tensor functor $F : E \to C$ the natural isomorphism $F \eta : F \to F i_* i^*$ is actually an isomorphism of lax tensor functors by the same lemma.
 
$3.$ Since $i^*$ is a tensor functor, we see that $G i^*$ is a tensor functor provided that $G$ is a tensor functor. Now assume that $F$ is an $i$-local tensor functor. We have to show that $F i_*$ is a tensor functor. The morphism $1_C \to (F i_*)(1_D)$ is defined as the composition $1_C \to F(1_E) \to F(i_* 1_D)$. The first morphism is an isomorphism since $F$ is a tensor functor. The second one is $F$ applied to $\eta_{1_E} : 1_E \to i_* 1_D$, hence also an isomorphism. Now let $M,N  \in D$. The morphism $(F i_*)(M) \otimes (F i_*)(N) \to (F i_*)(M \otimes N)$ is defined as the composition $F(i_*(M)) \otimes F(i_*(N)) \to F(i_* M \otimes i_* N) \to F(i_* (M \otimes N))$, where the first morphism is an isomorphism since $F$ is a tensor functor and the second one is $c : i_* M \otimes i_* N \to i_* (M \otimes N)$ mapped by $F$. But $i^* c$ is an isomorphism. By $1.$ above, $F$ maps $c$ to an isomorphism.

$4.$ Since $i^*$ has a right adjoint, it is cocontinuous. We see that $G i^*$ is a cocontinuous tensor functor provided that $G$ is a cocontinuous tensor functor. Now assume that $F$ is an $i$-local cocontinuous tensor functor. We have to show that $F i_*$ is cocontinuous. This works as before: For a diagram $\{M_j\}$ in $D$, the canonical morphism $\colim_j (F i_*)(M_j) \to (F i_*)(\colim_j M_j)$ factors as the isomorphism $\colim_j F(i_* M_j) \cong F(\colim_j i_* M_j)$ followed by $F$ applied to the canonical morphism $\colim_i i_* M_j \to i_* (\colim_i M_j)$, which is clearly an isomorphism after applying $i^*$. Thus $F$ maps it to an isomorphism, and we're done.
\end{proof}

\begin{cor} \label{immersion}
Let $i : Y \to X$ be a quasi-compact immersion of schemes. Then, for every cocomplete tensor category $C$, the functors $i_*$ and $i^*$ induce an equivalence of categories
\[\Hom_{c\otimes}\bigl(\Q(Y),C\bigr) \cong \{F \in \Hom_{c\otimes}\bigl(\Q(X),C\bigr) \text{ $Y$-local}\}\]
\end{cor}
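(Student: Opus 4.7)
The plan is to reduce the statement to a direct application of the previous Proposition (the one with four parts) by taking $D = \Q(Y)$, $E = \Q(X)$, and the adjunction $(i^*, i_*)$. The bulk of the argument consists in verifying that the hypotheses of that Proposition are met; once this is done, part (4) immediately produces the equivalence, and part (1) together with the observation that $i^*$ is restriction to $Y$ yields the stated reformulation of $i$-locality.

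First I would verify the adjunction hypothesis. Since $i$ is an immersion, it is a monomorphism, hence (quasi-)separated; combined with the quasi-compactness assumption, Example \ref{directimage} applies, so $i_* : \Q(Y) \to \Q(X)$ is well-defined and right adjoint to the cocontinuous tensor functor $i^* : \Q(X) \to \Q(Y)$. By the general construction recalled just before Lemma \ref{laxadjoint}, $i_*$ inherits a canonical lax tensor structure.

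Second, I would check that the counit $i^* i_* \to \id_{\Q(Y)}$ is an isomorphism. As already noted in Example \ref{directimage}, for any immersion $i$ the direct image $i_*$ is fully faithful, which is equivalent to invertibility of the counit. Thus all the assumptions of the previous Proposition are fulfilled with $C$ as given (which is cocomplete by hypothesis).

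Finally, applying part (4) of the Proposition yields the desired equivalence
\[\Hom_{c\otimes}\bigl(\Q(Y),C\bigr) \cong \{F \in \Hom_{c\otimes}\bigl(\Q(X),C\bigr) : F \text{ is } i\text{-local}\},\]
implemented by $G \mapsto G \circ i^*$ and $F \mapsto F \circ i_*$. For the concrete description of $i$-locality, I would invoke part (1) of the Proposition: a cocontinuous tensor functor $F$ on $\Q(X)$ is $i$-local if and only if it inverts every morphism $\phi$ such that $i^*\phi$ is an isomorphism. Since $i^*$ is exactly ``restriction to $Y$,'' this is precisely the condition stated in the corollary. There is no real obstacle here---the entire argument is bookkeeping on top of the previous Proposition; the only non-formal inputs are the standard facts that a quasi-compact immersion is quasi-separated and that the direct image along an immersion is fully faithful.
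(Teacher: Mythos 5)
Your proof is correct and is exactly the argument the paper intends: the corollary is an immediate application of the four-part Proposition to the adjunction $(i^*, i_*)$, with the hypotheses supplied by Example \ref{directimage} (a quasi-compact immersion is quasi-separated, so $i_*$ preserves quasi-coherence, and $i_*$ is fully faithful, so the counit is invertible). Parts (4) and (1) then give the equivalence and the reformulation of $i$-locality, just as you say.
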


\begin{cor} \label{reduction} 
Let $F : \Q(X) \to \Q(Y)$ be a cocontinuous tensor functor. Assume that $i : U \to X$ is a quasi-compact immersion, where $U$ is affine. If $F$ is $i$-local, then $F$ is induced by a morphism.
\end{cor}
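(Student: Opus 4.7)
The plan is to chain together Corollary \ref{immersion} and Corollary \ref{afftens} directly. Since $F$ is an $i$-local cocontinuous tensor functor $\Q(X)\to\Q(Y)$, I first apply Corollary \ref{immersion} with target category $C = \Q(Y)$. This tells me that the assignment $G \mapsto G \circ i^*$ is an equivalence between $\Hom_{c\otimes}(\Q(Y),\Q(Y))$ and the subcategory of $i$-local cocontinuous tensor functors $\Q(X) \to \Q(Y)$, with pseudo-inverse $F \mapsto F \circ i_*$. Therefore I obtain a cocontinuous tensor functor $G := F \circ i_* : \Q(Y) \to \Q(Y)$ together with a natural tensor isomorphism $F \simeq G \circ i^*$.

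Next, because $Y$ is assumed affine, I apply Corollary \ref{afftens} (taking both source and target to be $Y$): the functor $g \mapsto g^*$ is an equivalence between $\Hom(Y,Y)$ and $\Hom_{c\otimes}(\Q(Y),\Q(Y))$. Hence there is a morphism of schemes $g : Y \to Y$ with $G \simeq g^*$ as cocontinuous tensor functors.

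Combining the two isomorphisms gives $F \simeq g^* \circ i^* \simeq (i\circ g)^*$, and $i \circ g : Y \to X$ is the morphism of schemes inducing $F$. There is no real obstacle here: the statement is essentially a formal consequence of the preceding two corollaries, and the only thing worth noting is that Corollary \ref{immersion} is being used with $C = \Q(Y)$ itself, which is legitimate since $\Q(Y)$ is a cocomplete tensor category.
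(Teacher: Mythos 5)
Your argument is correct and is exactly the paper's proof, which reads in its entirety ``Combine corollaries \ref{immersion} and \ref{afftens}''; you have simply spelled out the combination (apply Corollary \ref{immersion} with $C=\Q(Y)$ to get $G=F\circ i_*$ with $F\simeq G\circ i^*$, then Corollary \ref{afftens} to realize $G\simeq g^*$, whence $F\simeq (i\circ g)^*$). The details, including the legitimacy of taking $C=\Q(Y)$ and of applying Corollary \ref{afftens} with source and target both equal to the affine scheme $Y$, all check out.
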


\begin{proof}
Combine corollaries \ref{immersion} and \ref{afftens} 
\end{proof}


\section{Proof of the Theorem}


\subsection{Full faithfulness}

We now embark on the proof of the theorem announced in the introduction. We have to prove that the functor $f\mapsto f^*$ is an equivalence, that is, it is fully faithful and essentially surjective. The following proposition takes care of the full faithfulness part. 
 
\begin{prop}\label{fullfaith}
Let $f,g : Y \to X$ be morphisms of schemes with $X$ quasi-separated, and let $\alpha:f^*\Rightarrow g^*$ be a tensor natural transformation. Then, $f=g$ and $\alpha=\id$. 
\end{prop}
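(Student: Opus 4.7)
The plan is to reduce the problem to the affine case, where Corollary~\ref{afftens} immediately gives that $\Hom_{c\otimes/R}\bigl(\Q(X),\Q(Y)\bigr)$ is (equivalent to) a discrete category. Two general facts will be used throughout. First, since $\alpha$ is tensor-natural, $\alpha_{\mathcal{O}_X}=\id_{\mathcal{O}_Y}$, and naturality at any morphism $\phi\colon\mathcal{O}_X\to\mathcal{F}$ in $\Q(X)$ yields
\[ \alpha_{\mathcal{F}}\circ f^{*}\phi \;=\; g^{*}\phi.\]
Second, quasi-separatedness of $X$ makes every affine-open inclusion $j\colon U\hookrightarrow X$ quasi-compact, so by Example~\ref{directimage} $j_{*}\mathcal{O}_U$ is a quasi-coherent $\mathcal{O}_X$-algebra, and flat quasi-compact base change identifies $f^{*}(j_{*}\mathcal{O}_U)$ with $(j_V)_{*}\mathcal{O}_V$ for $V=f^{-1}(U)$, and analogously for $g$ with $W=g^{-1}(U)$.

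Next I would pick an affine open cover $\{U_i=\Spec A_i\}$ of $X$ and set $V_i=f^{-1}(U_i)$, $W_i=g^{-1}(U_i)$; both families cover $Y$, and by base change each inclusion $V_i,W_i\hookrightarrow Y$ is quasi-compact. The heart of the argument is to show that for every $i$ one has $W_i\subseteq V_i$ and that the two induced morphisms $f|_{W_i},\,g|_{W_i}\colon W_i\to U_i$ coincide. Granting this crux, I would conclude as follows: pull back $\alpha$ along the quasi-compact open immersion $k_i\colon W_i\hookrightarrow Y$ to obtain a tensor natural transformation $k_i^{*}\alpha$ between two cocontinuous tensor functors $\Q(X)\to\Q(W_i)$ which are then both $j_i$-local (they factor through $j_i^{*}$). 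By Corollary~\ref{immersion}, $k_i^{*}\alpha$ corresponds to a tensor natural transformation between cocontinuous tensor functors $\Q(U_i)\to\Q(W_i)$, and Corollary~\ref{afftens} makes this target category discrete. Hence the restricted morphisms agree and $k_i^{*}\alpha=\id$; since $\{W_i\}$ covers $Y$, these glue to $f=g$ and $\alpha=\id$.

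The main obstacle is the crux in the previous paragraph. Tensor-naturality makes $\alpha_{j_{i,*}\mathcal{O}_{U_i}}\colon (j_{V_i})_{*}\mathcal{O}_{V_i}\to (j_{W_i})_{*}\mathcal{O}_{W_i}$ a morphism of quasi-coherent $\mathcal{O}_Y$-algebras compatible with the units (which are precisely the restriction maps from $\mathcal{O}_Y$). On its own this is not enough to pin down the geometry---Hartogs-type phenomena on regular local rings of dimension $\geq 2$ show that such an algebra map can exist without any containment of opens. The additional leverage I would use is naturality at each morphism $\phi_a\colon\mathcal{O}_X\to j_{i,*}\mathcal{O}_{U_i}$ arising from an element $a\in A_i=\Hom_{\Q(X)}(\mathcal{O}_X,j_{i,*}\mathcal{O}_{U_i})$: this yields $\alpha(f^{\sharp}a)=g^{\sharp}a$ in $\mathcal{O}_Y(W_i)$, and combined with naturality at further morphisms of quasi-coherent sheaves on $X$ that distinguish distinct affine opens (the analogue of the line-bundle argument that rules out $f(y)\neq g(y)$ in the projective case) this should force the containment $W_i\subseteq V_i$ and the equality of the ring maps $A_i\to\mathcal{O}_Y(W_i)$ induced by $f,g$, whence, $U_i$ being affine, the equality of the restricted morphisms. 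This delicate but formal bookkeeping on affine patches of $Y$ is, as far as I can see, the only non-mechanical input needed; once it is in place, the rest of the argument is assembled by Corollaries~\ref{immersion} and~\ref{afftens}.
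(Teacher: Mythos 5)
Your outer skeleton (establish a set-theoretic comparison of $f$ and $g$ on preimages of affine opens, then conclude via Corollaries~\ref{immersion} and~\ref{afftens}) matches the paper, but the step you yourself identify as the crux --- forcing a containment between $f^{-1}(U_i)$ and $g^{-1}(U_i)$ --- is exactly the non-formal content of the proposition, and you leave it as a hope (``this should force the containment'', via an unspecified ``analogue of the line-bundle argument''). On a general quasi-separated scheme there are no ample line bundles to exploit, so this is a genuine gap, not bookkeeping. The paper's device is the following. First reduce to $Y=\Spec(S)$ local by checking both $f=g$ and $\alpha=\id$ stalkwise along the canonical maps $\Spec(\O_{Y,y})\to Y$; now only the two points $x_f=f(y)$, $x_g=g(y)$ (for $y$ the closed point) need to be compared. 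Let $I\subseteq\O_X$ be the quasi-coherent vanishing ideal of $Z=\overline{\{g(y)\}}$. The component $\alpha_{\O_X/I}\colon f^*(\O_X/I)\to g^*(\O_X/I)$ is a morphism of \emph{unital} algebra objects (unitality comes from $\alpha_{\O_X}=\id$ plus naturality), so wherever the target is nonzero the source is nonzero: $\supp g^*(\O_X/I)\subseteq\supp f^*(\O_X/I)$, i.e.\ $g^{-1}(Z)\subseteq f^{-1}(Z)$ by \cite[Ch.~0, 5.2.4.1]{EGAI}. Hence $f(y)\in\overline{\{g(y)\}}$, so any affine open $U\ni f(y)$ also contains $g(y)$, and since $Y$ is local both morphisms factor through $U$; then Corollaries~\ref{immersion} and~\ref{afftens} finish as you describe.

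Two further remarks. First, note the \emph{direction}: the argument above yields $f^{-1}(U)\subseteq g^{-1}(U)$ for open $U$ (equivalently $g^{-1}(Z)\subseteq f^{-1}(Z)$ for closed $Z$), which is the opposite of your claimed $W_i\subseteq V_i$; you would have to restrict to $V_i=f^{-1}(U_i)$ rather than $W_i$ (this is harmless, since the $V_i$ also cover $Y$, but it signals that the containment was not actually derived). Second, your worry about Hartogs-type phenomena for the algebra map $\alpha_{j_{i,*}\O_{U_i}}$ is well taken --- that map alone does not control the topology --- which is precisely why the paper switches from pushforwards of structure sheaves of \emph{opens} to quotients $\O_X/I$ supported on \emph{closed} subsets, where the unital-algebra support argument bites.
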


\begin{proof}
Both $f=g$ and $\alpha=\id$ can be checked stalkwise on $Y$: They hold if and only if the corresponding statements hold for $f\circ i_y$ and $g\circ i_y$ for all points $y\in Y$, where $i_y:\Spec(\O_{Y,y})\to Y$ is the canonical map. As a consequence, we may assume that $Y$ is the spectrum of a local ring $S$. Its closed point will be denoted by $y$. 

Next we claim that $f(y)$ is a specialization of $g(y)$. Indeed, let $I \subseteq \O_X$ be the vanishing ideal sheaf of the closed subset $Z:=\overline{\{g(y)\}}$ of $X$. Since $\alpha$ induces a map of $S$-algebras $f^*(\O_X/I) \to g^*(\O_X/I)$, it follows that $\supp g^*(\O_X/I) \subseteq \supp f^*(\O_X/I)$. According to \cite[Chapitre 0, 5.2.4.1]{EGAI}, this means that $g^{-1}(Z) \subseteq f^{-1}(Z)$. Now $y \in g^{-1}(Z)$ implies $f(y) \in Z = \overline{\{g(y)\}}$, which proves our claim.

If $i : U \hookrightarrow X$ is an affine open neighborhood of $f(y)$, then it must contain $g(y)$, too. Since $Y$ is local, then both $f$ and $g$ factor through $U$, say $f=if'$ and $g=ig'$ for morphisms $f',g' : Y \to U$. Remark that $i$ is a quasi-compact immersion since $X$ is quasi-separated and that $f^* \cong f'^* i^*$ is $i$-local, similarly $g^*$. Now Corollary \ref{immersion} gives us a tensor natural transformation $\alpha' : f'^* \Rightarrow g'^*$ of tensor functors $\Q(U) \to \Q(Y)$ with $i^* \alpha'=\alpha$. But now are in the affine case and Corollary \ref{afftens} yields $f'=g'$ and $\alpha'=\id$. This implies $f=g$ and $\alpha=\id$, and we are done.
\end{proof}


\subsection{Good schemes}

The following notion will play a crucial role in the rest of the paper.

\begin{defi}
A scheme $Y$ is called \emph{good} if every cocontinuous tensor functor $\Q(X) \to \Q(Y)$, where $X$ is a quasi-compact quasi-separated scheme, is induced by some morphism $Y \to X$. A ring $A$ is called good if $\Spec(A)$ is good.
\end{defi}

\begin{rem}\label{remgood}
Since Proposition \ref{fullfaith} already provides the full faithfulness part of Theorem \ref{main}, $Y$ is good precisely when $f\mapsto f^*$ implements an equivalence \[\Hom(Y,X) \simeq \Hom_{c\otimes}\bigl(\Q(X),\Q(Y)\bigr)\] for every quasi-compact quasi-separated scheme $X$.   
\end{rem}

\begin{rem} \label{fpqc}
For an arbitrary scheme $X$, both the functor $\Hom(-,X)$ and the pseudofunctor $\Hom_{c\otimes}\bigl(\Q(X),\Q(-)\bigr)$ are stacks in the Zariski topology on the category of all schemes (even in the fpqc topology by descent theory \cite[2.55, 4.23]{Vis}, but we won't need that here). It is clear that the pullback construction gives a morphism of stacks
\[\Hom(-,X) \to \Hom_{c\otimes}\bigl(\Q(X),\Q(-)\bigr),\]
which we have already seen to be fully faithful if $X$ is quasi-separated. In particular, goodness is a Zariski local property. More explicitly, if $F$ is a cocontinuous tensor functor $\Q(X) \to \Q(Y)$  and $Y$ is covered by open subschemes $Y_i$ such that each restriction $F|_{Y_i} : \Q(X) \to \Q(Y_i)$ is induced by a unique morphism $Y_i \to X$, then full faithfulness implies that these morphisms glue to a morphism $Y \to X$ which induces $F$.
\end{rem}
 

\subsection{Local rings are good}

Let $(A,\mathfrak{m})$ be a local ring and $F : \Q(X) \to \M(A)$ be a cocontinuous tensor functor, where $X$ is quasi-compact and quasi-separated. 

\begin{defi}
Let $i : U \hookrightarrow X$ be an affine open subscheme. We call $F$ \textit{weakly $U$-local} if for every quasi-coherent ideal $I \subseteq \O_X$ with $I|_U = \O_U$, $F$ maps the inclusion $I \hookrightarrow \O_X$ to an isomorphism.
\end{defi}

\begin{lemma} \label{ideal}
If $I \subseteq \O_X$ is a quasi-coherent ideal such that $F$ maps the inclusion $I \hookrightarrow \O_X$ to an epimorphism, then $F$ even maps it to an isomorphism.
\end{lemma}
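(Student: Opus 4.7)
The plan is to exploit the tensor functoriality of $F$ via a simple observation about the multiplication on $I$: because $I$ is an ideal of $\O_X$, the multiplication $\mu : I \otimes_{\O_X} I \to I$ admits two equivalent factorizations, namely as $\mu = (\O_X \otimes_{\O_X} I \xrightarrow{\sim} I) \circ (j \otimes \id_I)$ and as $\mu = (I \otimes_{\O_X} \O_X \xrightarrow{\sim} I) \circ (\id_I \otimes j)$, the two canonical unit isomorphisms coinciding with the $\O_X$-action on the submodule $I$.

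Applying $F$ to both factorizations and using that $F(\O_X) = A$ (so that the structural isomorphisms of $F$ carry the unit isomorphisms $\O_X \otimes - \cong \id$ in $\Q(X)$ to the $A$-action isomorphisms $A \otimes_A - \cong \id$ in $\M(A)$), I obtain the key identity
\[
F(\mu)(x \otimes y) \;=\; F(j)(x) \cdot y \;=\; x \cdot F(j)(y)
\]
for all $x, y \in F(I)$, where $\cdot$ denotes the natural $A$-module action on $F(I)$.

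The hypothesis that $F(j)$ is an epimorphism in $\M(A)$ gives its surjectivity, so pick $e \in F(I)$ with $F(j)(e) = 1$. Substituting $y = e$ in the identity above yields $x = F(j)(x) \cdot e$ for every $x \in F(I)$. In particular, if $x \in \ker F(j)$ then $x = 0 \cdot e = 0$, so $F(j)$ is also a monomorphism and hence an isomorphism. The only technical point to justify is the compatibility of $F$ with the two factorizations of $\mu$, which is immediate from the tensor-functor axioms (specifically, the coherence between the structural isomorphism $F(X \otimes Y) \cong F(X) \otimes F(Y)$ and the unit isomorphism). Notably, this argument does not actually require that $A$ be local; that hypothesis must therefore enter elsewhere in the larger proof that local rings are good.
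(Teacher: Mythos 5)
Your proposal is correct and follows essentially the same route as the paper: both transport the multiplication $I\otimes I\to I$ and its factorization through $\O_X\otimes I\cong I$ along $F$, pick $e\in F(I)$ with $F(j)(e)=1$ using surjectivity, and deduce $x=F(j)(x)\cdot e$, hence injectivity. (The paper phrases this as $F(I)$ being a non-unital commutative algebra with a homomorphism to $A$ for which any preimage of $1$ is a unit, but the computation is the same; your remark that locality of $A$ is not needed here is also accurate.)
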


\begin{proof}
We may view $I$ as a non-unital commutative algebra object in the tensor category $\Q(X)$, such that the inclusion $I \hookrightarrow \O_X$ is an algebra homomorphism. Clearly, the diagram
\[\xymatrix@=14pt{I \otimes I \ar[rr] \ar[dr] && I \\ & \O_X \otimes I \ar[ur]_-{\cong} & }\]
commutes. Thus $B:=F(I)$ is a non-unital algebra object in $\M(A)$ together with a non-unital algebra homomorphism $f : B \to A$ such that $b c = f(b) c$ for all $b,c \in B$. By assumption, $f$ is surjective. Every element $e \in f^{-1}(1)$ will serve as a unit for $B$. Since $b = b e = f(b) e$, it follows that $f$ is also a monomorphism, and hence an isomorphism.
\end{proof}

\begin{lemma} \label{weak}
There is some affine open subscheme $i : U \hookrightarrow X$ such that $F$ is weakly $U$-local.
\end{lemma}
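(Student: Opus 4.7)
The plan is to find a ``largest bad ideal'' and use its nonempty vanishing locus to locate $U$. Call a quasi-coherent ideal $I \subseteq \O_X$ \emph{bad} if $F$ sends the inclusion $I \hookrightarrow \O_X$ to a non-isomorphism. By Lemma \ref{ideal}, this is equivalent to the induced $A$-linear map $F(I) \to F(\O_X) = A$ not being surjective. Since $A$ is local with maximal ideal $\mathfrak{m}$, ``not surjective'' just means the image ideal $J_I := \mathrm{im}(F(I) \to A)$ lies in $\mathfrak{m}$. So the bad ideals are precisely those with $J_I \subseteq \mathfrak{m}$.

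First I would show that the class of bad ideals is stable under arbitrary sums of subobjects in $\Q(X)$. For two ideals $I, I'$ one has the short exact sequence $0 \to I \cap I' \to I \oplus I' \to I + I' \to 0$, so $I + I'$ is the cokernel of the first map. Since $F$ is cocontinuous, applying $F$ presents $F(I + I')$ as the cokernel of $F(I \cap I') \to F(I) \oplus F(I')$. Chasing the resulting map down to $A$ yields $J_{I + I'} = J_I + J_{I'}$. Writing an arbitrary sum $\sum_\alpha I_\alpha$ as a filtered colimit of its finite subsums and invoking cocontinuity of $F$ once more gives $J_{\sum_\alpha I_\alpha} = \sum_\alpha J_{I_\alpha}$. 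Hence sums (even large ones) of bad ideals remain bad.

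Now I would form $I_{\max} := \sum_{I \text{ bad}} I$; this makes sense because quasi-coherent subsheaves of $\O_X$ form a set. By the previous step $I_{\max}$ is itself bad, so in particular $I_{\max} \neq \O_X$ (otherwise $F(I_{\max}) \to A$ would be the identity). On a scheme, a proper quasi-coherent ideal has nonempty zero set, so I can pick a point $x \in V(I_{\max})$ and an affine open neighborhood $i : U \hookrightarrow X$ of $x$. I claim $F$ is weakly $U$-local: if $I$ is a quasi-coherent ideal with $I|_U = \O_U$, i.e.\ $V(I) \cap U = \emptyset$, and $I$ were bad, then $I \subseteq I_{\max}$ would force $V(I) \supseteq V(I_{\max}) \ni x$, contradicting $x \in U$. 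Hence $I$ is not bad, finishing the proof.

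The delicate point is stability of ``bad'' under \emph{all} sums (not just finite or filtered), which is what makes $I_{\max}$ itself bad rather than merely a supremum of bad ideals. Set-theoretically this rests on the fact that the collection of quasi-coherent subsheaves of $\O_X$ is small, so the large sum can be rewritten as a filtered colimit over its finite subsums, where cocontinuity of $F$ applies cleanly; structurally it relies on the cokernel computation above together with $\mathfrak{m}$ being closed under addition.
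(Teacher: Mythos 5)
Your proof is correct, and although it runs on the same engine as the paper's --- Lemma \ref{ideal}, plus the observation that the image ideals $J_I \subseteq A$ add under sums of quasi-coherent ideals (because $F$ preserves the relevant cokernels and filtered colimits) and that $\mathfrak{m}$ absorbs such sums --- the way you deploy that engine is genuinely different. The paper fixes a finite affine cover $X = U_1 \cup \cdots \cup U_n$ (this is where quasi-compactness enters) and argues by contradiction: if no $U_k$ worked, each would carry a bad ideal $I_k$ with $I_k|_{U_k} = \O_{U_k}$, and the \emph{finite} sum $\sum_k I_k = \O_X$ would again be bad, which is absurd since $F$ sends the identity of $\O_X$ to an isomorphism. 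You instead form the maximal bad ideal $I_{\max}$ as a set-indexed sum, deduce that it is proper, and take for $U$ any affine neighbourhood of a point of $V(I_{\max})$. Your route costs slightly more: you need stability of badness under \emph{arbitrary} sums, hence that quasi-coherent subsheaves of $\O_X$ form a set (clear here, since on a quasi-compact quasi-separated $X$ such a subsheaf is determined by ideals in finitely many rings) and that a large sum of subobjects is the filtered colimit of its finite subsums, which $F$ respects. In exchange it buys something: quasi-compactness of $X$ plays no direct role in your argument, and you obtain the sharper conclusion that $F$ is weakly $U$-local for \emph{every} affine open $U$ meeting the canonical closed ``bad locus'' $V(I_{\max})$, rather than merely for some member of a chosen cover. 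All the individual steps check out, including the computation $J_{I+I'} = J_I + J_{I'}$ via the cokernel presentation of $I+I'$ and the passage to the image in $A$ along the epimorphism $F(I)\oplus F(I') \to F(I+I')$.
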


\begin{proof}
Choose some affine open covering $X = U_1 \cup \cdots \cup U_n$. If $F$ is not weakly $U_k$-local for every $1 \leq k \leq n$, then by Lemma \ref{ideal} there are quasi-coherent ideals $I_k \subseteq \O_X$ with $I_k |_{U_k} = \O_{U_k}$ such that $F(I_k \hookrightarrow \O_X)$ is not an epimorphism, thus factors through $\mathfrak{m} \subseteq A = F(\O_X)$. Then the same must be true for $F(\oplus_k I_k \to \O_X)$ and hence for $F(\sum_k I_k \hookrightarrow \O_X)$. This is a contradiction since $\sum_k I_k = \O_X$.
\end{proof}

\begin{lemma} \label{pairing}
Let $M,N,P \in \Q(X)$ and $\psi : M \otimes N \to P$ be a pairing. Assume we are given a quasi-coherent submodule $P' \subseteq P$. Define a submodule $M' \subseteq M$ by
\[\Gamma(U,M') = \bigl\{m \in \Gamma(U,M) : \forall V \subseteq U,\ \forall n \in \Gamma(V,N),\ \psi(m|_V \otimes n) \in \Gamma(V,P')\bigr\}.\]
If $N$ is of finite type, then $M'$ is quasi-coherent.
\end{lemma}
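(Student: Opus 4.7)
The plan is to use that quasi-coherence is a local property on $X$ and reduce to an affine chart, where finite generation of $N$ will allow me to realize $M'$ as the kernel of an explicit morphism of quasi-coherent modules. I would first observe that $M'$ is automatically a subsheaf of $M$: the sheaf axiom is inherited from the fact that $P' \subseteq P$ is itself a subsheaf, together with the local character of the defining condition.

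Cover $X$ by affine opens $U = \Spec(A)$ and work on a fixed such chart. Since $N$ is of finite type, after possibly shrinking $U$ I may pick finitely many generators $n_1, \dots, n_k \in \Gamma(U, N)$ of $N|_U$ over $A$. Writing $M, N, P, P'$ for the $A$-modules associated to $M|_U, N|_U, P|_U, P'|_U$, I would form
\[ M'' := \bigcap_{i=1}^k \ker\!\bigl( M \xrightarrow{\phi_i} P/P' \bigr), \qquad \phi_i(m) := \psi(m \otimes n_i) \bmod P', \]
which is an $A$-submodule of $M$, so the associated $\widetilde{M''}$ is a quasi-coherent subsheaf of $M|_U$. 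This is my candidate for $M'|_U$.

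It then remains to identify $M'|_U$ with $\widetilde{M''}$ as subsheaves of $M|_U$, and since both are sheaves it suffices to check this on a basis of principal opens $D(f) \subseteq U$. One inclusion is easy: for $m \in M''$ and a section $n \in \Gamma(W, N)$ on any open $W \subseteq D(f)$, locally $n$ is an $\O_W$-linear combination of the $n_i|_W$, so $\psi(m|_W \otimes n)$ lies in $\Gamma(W, P')$. The step I expect to be the main subtlety is the reverse inclusion: given $m \in \Gamma(D(f), M')$, I need to produce an exponent $s$ with $f^s m \in M''$ so that $m \in M''_f$. The idea is to plug $V = D(f)$ and $n = n_i$ into the defining condition of $M'$, which yields $\psi(m \otimes n_i) \in \Gamma(D(f), P') = P'_f$ for each $i$, and then clear denominators uniformly in $i$ using that there are only finitely many generators. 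This last localization step is precisely where the finite-type hypothesis on $N$ is essential: it allows $M''$ to be cut out by finitely many conditions (so that a common denominator exists) and it allows sections of $N$ on principal opens to be recovered as $A_f$-linear combinations of the fixed generators.
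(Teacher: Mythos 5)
Your proposal is correct and follows essentially the same route as the paper: reduce to an affine chart, use the finite-type hypothesis to obtain an epimorphism $\O_X^k \to N$, and realize $M'$ as the kernel of the induced map $M \to (P/P')^k$. The only difference is that you spell out the verification that this kernel agrees with the subsheaf $M'$ defined by the section-wise formula (checking both inclusions on principal opens and clearing denominators), a point the paper's proof asserts without detail.
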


\begin{rem}
In other words, $M'$ is the largest submodule of $M$ whose pairing with $N$ by means of $\psi$ lands inside $P'\subseteq P$. We will denote it by $(P':N)_{\psi}$.
\end{rem}

\begin{proof}
Quasi-coherence is local on $X$, so we can assume that $X$ is affine, and hence there is an epimorphism $\O_X^n\to N$. By composing $\psi$ with the resulting epimorphism $M\otimes\O_X^n\to M\otimes N$, one gets a pairing $\psi':M\otimes\O_X^n\to P$. 

Now, $\psi':M\otimes\O_X^n\to P$ is essentially the same as a morphism $\psi':M\to P^n$, and $M'$ is nothing but the kernel of the morphism $M\stackrel{\psi'}{\rightarrow}P^n\to P^n/(P')^n$ of quasi-coherent sheaves. In conclusion, $M'$ is quasi-coherent. 
\end{proof}

\begin{lemma} \label{iso}
If $F$ is weakly $U$-local, then $F$ is $U$-local, that is: If $\phi : M \to N$ is a morphism in $\Q(X)$ which is an isomorphism on $U$, then $F$ maps $\phi$ to an isomorphism.
\end{lemma}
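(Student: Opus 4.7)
The plan is to factor $\phi: M \to N$ through its image as $\phi = j \circ p$ with $p : M \twoheadrightarrow \phi(M)$ and $j : \phi(M) \hookrightarrow N$, and treat the epimorphism and monomorphism separately. Both $K := \ker(p)$ and $C := N/\phi(M)$ have zero restriction to $U$, and cocontinuity of $F$ makes $F(K) \to F(M) \to F(\phi(M)) \to 0$ exact, so $F(p)$ is an isomorphism whenever $F(K) = 0$. The lemma thus reduces to two claims: (A) $F(L) = 0$ for every $L \in \Q(X)$ with $L|_U = 0$, and (B) $F(j)$ is an isomorphism for every monomorphism $j : M' \hookrightarrow N$ in $\Q(X)$ with $(N/M')|_U = 0$.

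For (A), first suppose $L$ is of finite type. Apply Lemma \ref{pairing} to the multiplication pairing $\O_X \otimes L \to L$ with $P' = 0$; this produces a quasi-coherent annihilator ideal $I := (0 : L) \subseteq \O_X$. Since $L|_U = 0$, we have $I|_U = \O_U$, and weak $U$-locality yields $F(I) \xrightarrow{\sim} F(\O_X) = A$. The defining identity $I \cdot L = 0$ says the composite $I \otimes L \hookrightarrow \O_X \otimes L \cong L$ is zero; applying the tensor functor $F$ and identifying $F(I) \cong A$, the module action $A \otimes F(L) \to F(L)$ must vanish, forcing $F(L) = 0$. For general $L$ with $L|_U = 0$, write $L$ as the filtered colimit of its finite-type quasi-coherent subsheaves (available since $X$ is quasi-compact quasi-separated); each subsheaf still has zero restriction to $U$ and is killed by the finite-type case, so cocontinuity gives $F(L) = 0$.

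For (B), first take $N$ (and hence $C := N/M'$) of finite type. Set $I := \mathrm{Ann}(C) \subseteq \O_X$, so that $I|_U = \O_U$. The inclusion $I \cdot N \subseteq M'$ factors the multiplication $\mu_N : I \otimes N \to N$ as $\mu_N = j \circ \tilde\mu$ for some $\tilde\mu : I \otimes N \to M'$, and naturality yields $\tilde\mu \circ (I \otimes j) = \mu_{M'}$. Applying $F$ and using $F(I) \xrightarrow{\sim} A$, both $F(\mu_N)$ and $F(\mu_{M'})$ are identified with the canonical module-action isomorphisms on $F(N)$ and $F(M')$. A direct computation shows that $\sigma := F(\tilde\mu) \circ F(\mu_N)^{-1} : F(N) \to F(M')$ is a two-sided inverse of $F(j)$, so $F(j)$ is an isomorphism. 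For general $N$, write $N = \colim_\beta N_\beta$ over the filtered system of finite-type subsheaves and express $j$ cofinally as the filtered colimit of monomorphisms $M'' \hookrightarrow N_\beta$ with $M'' \subseteq M' \cap N_\beta$ finite type and $M''|_U = N_\beta|_U$; such $M''$ exist because $N_\beta|_U$ is of finite type over the affine $U$. Each such mono is handled by the finite-type case, and cocontinuity gives $F(j)$ an isomorphism.

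The main obstacle I anticipate is the reduction to finite type in (B): one must compatibly choose finite-type submodules of $M' \cap N_\beta$ agreeing with $N_\beta$ over $U$ and verify that they assemble into a filtered system whose colimit recovers $j$. Everything else is a fairly mechanical use of Lemma \ref{pairing}, weak $U$-locality, and cocontinuity.
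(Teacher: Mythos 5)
Your proof is correct, but it diverges from the paper's argument in two substantive ways, and it is worth recording what each route buys. For the epimorphism half, the paper avoids kernels altogether: it writes $\phi$ as the coequalizer of the two projections of the kernel pair $K=M\times_N M$ and uses the diagonal splitting $i:M\to K$ (an isomorphism over $U$, handled by the monomorphism case) to conclude $F(p_1)=F(p_2)$; you instead prove the cleaner statement (A) that $F$ annihilates every quasi-coherent module vanishing on $U$, via the annihilator ideal $(0:L)$, and invoke right-exactness of $F$. Your (A) is a genuinely useful isolated fact, but note it leans on $\Q(X)$ and $F$ being additive/right-exact, whereas the paper's kernel-pair trick is what survives verbatim in the generalized-scheme setting of Section 4 (where ``epimorphism'' must be read as ``regular epimorphism'' and kernels are less well behaved). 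For the monomorphism half, the essential mechanism is the same --- produce via Lemma \ref{pairing} a quasi-coherent ideal restricting to $\O_U$, apply weak $U$-locality, and use the multiplication pairing to manufacture a two-sided inverse of $F(j)$ --- but the paper pairs $\O_X\otimes N_i\to N$ against a finite-type submodule $N_i\subseteq N$ and takes the \emph{transporter} ideal $(M:N_i)$, which is quasi-coherent as soon as $N_i$ is of finite type; this lets $M$ stay fixed and reduces the general case to $\colim_i\bigl(M\hookrightarrow M+N_i\bigr)$ using only \cite[6.9.9]{EGAI}. You instead take the annihilator of the quotient $N/M'$, which forces you to approximate \emph{both} $M'$ and $N$ by finite-type submodules compatibly over $U$; the cofinal system you describe does exist and the colimit argument goes through, but the existence of a finite-type $M''\subseteq M'\cap N_\beta$ with $M''|_U=N_\beta|_U$ is exactly the extension lemma for finite-type quasi-coherent submodules from a quasi-compact open (\cite[6.9.7]{EGAI} in the 1971 edition), an extra input the paper's choice of ideal sidesteps entirely. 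You correctly flagged this as the delicate point; it should be stated and cited rather than asserted.
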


\begin{proof} Since $\phi$ factors as an epimorphism followed by a monomorphism, both of which are isomorphisms on $U$, it suffices to treat two cases:

\textit{1. $\phi$ is a monomorphism.} In this case $\phi$ may be treated as an inclusion $M \subseteq N$. Write $N$ as the filtered colimit of its quasi-coherent submodules $\{N_i\}$ of finite type (\cite[6.9.9]{EGAI}), and consider the pairings $\psi_i:\O_X\otimes N_i \to N$. 

Fixing an $i$, note that $I=(M:M+N_i)_{\psi_i}\subseteq \O_X$ equals $(M:N_i)_{\psi_i}$; by Lemma \ref{pairing}, $I$ is a quasi-coherent ideal. Thus we may replace $N_i$ by $N_i + M$ and assume $M \subseteq N_i$. By the very definition, $I$ is the largest ideal such that the multiplication $I \otimes N_i \to N$ factors through $M$. Observe that $I|_U = \O_U$, since $M|_U = N|_U$. The weak $U$-locality of $F$ implies that $F$ maps $I \hookrightarrow \O_X$ to an isomorphism. Consider the following commutative diagrams:
\[\xymatrix{I \otimes M \ar[r] \ar[d] & \O_X \otimes M \ar[d]^{\cong} \\ I \otimes N_i \ar[r] & M} ~ ~ ~ ~
\xymatrix{I \otimes N_i \ar[r] \ar[d] & \O_X \otimes N_i \ar[d]^{\cong} \\ M  \ar[r] & N_i}\]
Keeping in mind that $F(I\otimes M\to M)$ is an isomorphism (because $F(I\hookrightarrow \O_X)$ is one), the left hand diagram shows that $F$ maps $I \otimes M \to I \otimes N_i$ and hence also $M \hookrightarrow N_i$ to a split monomorphism. Similarly, the right hand diagram shows that $F(M\hookrightarrow N_i)$ is a split epimorphism. In conclusion, $F(M \hookrightarrow N_i)$ is an isomorphism for all $i$. Taking the colimit over $i$, it follows that $F(M \hookrightarrow N)$ is an isomorphism.

\textit{2. $\phi$ is an epimorphism.} Let $K = M \times_N M$ the difference kernel of $\phi$. It consists of pairs of sections $(m,m')$ such that $\phi(m)=\phi(m')$. Since $\phi$ is an epimorphism, it is the coequalizer of the two projections $p_1,p_2 : K \rightrightarrows M$. Then, $F(\phi)$ is the coequalizer of $F(p_1)$ and $F(p_2)$, so that it suffices to show $F(p_1)=F(p_2)$. If $i : M \to K$ is the diagonal homomorphism defined by $m \mapsto (m,m)$, we have $p_1 i = p_2 i = \id_M$. Since $\phi$ is an isomorphism on $U$, the same must be true for $i$. Now, $i$ is a split monomorphism, so the first case shows that $F(i)$ is an isomorphism. But then, $F(p_1)= F(i)^{-1} = F(p_2)$.
\end{proof}

\begin{thm} \label{local}
Every local ring is good.
\end{thm}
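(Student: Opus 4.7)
The proof should now essentially be an assembly of the machinery already set up in this subsection together with Corollary \ref{reduction}. The plan is as follows.

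Let $(A,\mathfrak{m})$ be a local ring, put $Y=\Spec(A)$, and let $F:\Q(X)\to\M(A)=\Q(Y)$ be a cocontinuous tensor functor with $X$ quasi-compact and quasi-separated. By Lemma \ref{weak} there exists an affine open subscheme $i:U\hookrightarrow X$ such that $F$ is weakly $U$-local, and then Lemma \ref{iso} upgrades this to genuine $U$-locality: $F$ inverts every morphism in $\Q(X)$ whose restriction to $U$ is an isomorphism.

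Next I would check that the open immersion $i:U\hookrightarrow X$ is quasi-compact, so that Corollary \ref{immersion} applies. This follows from the hypothesis that $X$ is quasi-separated together with the fact that $U$ is affine and hence quasi-compact: the inclusion of a quasi-compact open into a quasi-separated scheme is automatically quasi-compact. Therefore Corollary \ref{immersion} produces a cocontinuous tensor functor $F':=F\circ i_*:\Q(U)\to \M(A)$ such that $F'\circ i^*\cong F$ as tensor functors.

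Finally, since $U$ is affine and $Y=\Spec(A)$, Corollary \ref{reduction} (or directly Corollary \ref{afftens}) implies that $F'$ is of the form $g^*$ for a unique morphism $g:Y\to U$. Composing with the inclusion, the morphism $f:=i\circ g:Y\to X$ satisfies $f^*\cong g^*\circ i^* \cong F'\circ i^*\cong F$, which shows that $A$ (equivalently $Y$) is good.

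I do not expect any real obstacle here: all the work has been done in Lemmas \ref{weak} and \ref{iso}, which reduce the problem to an affine open neighborhood (reflecting the fact that the image of the closed point of $\Spec(A)$ should land inside some affine open, after which everything is determined by the affine case handled in Corollary \ref{afftens}). The only point one has to be a little careful about is to record that $i$ is quasi-compact so that $i_*$ preserves quasi-coherence and the adjunction from Example \ref{directimage} is available; this is exactly where quasi-separatedness of $X$ is used.
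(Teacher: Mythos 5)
Your proof is correct and follows exactly the paper's argument: apply Lemma \ref{weak} to get weak $U$-locality, upgrade to $U$-locality via Lemma \ref{iso}, and conclude with Corollary \ref{reduction}. The additional observation that $i$ is quasi-compact (since $U$ is a quasi-compact open in a quasi-separated scheme) is a detail the paper leaves implicit, and you are right to record it.
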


\begin{proof}
If $F : \Q(X) \to \M(A)$ is as above, we have seen in Lemma \ref{weak} that $F$ is weakly $U$-local for some $U$, hence $U$-local by Lemma \ref{iso}. Now use Corollary \ref{reduction}.
\end{proof}


\subsection{Every scheme is good}
  
\begin{prop} \label{map}
Let $X$ be a quasi-compact quasi-separated scheme and $Y$ an arbitrary scheme. Let $F : \Q(X) \to \Q(Y)$ be a cocontinuous tensor functor. Then, for every $y \in Y$, there is a local homomorphism $\O_{X,x} \to \O_{Y,y}$ for some $x\in X$, together with an isomorphism of tensor functors from $\Q(X)$ to $\M(\O_{Y,y})$:
\[F(M)_y \cong M_{x} \otimes_{\O_{X,x}} \O_{Y,y}.\]
Moreover, if we define $f(y) := x$, then the map $f : Y \to X$ is continuous.
\end{prop}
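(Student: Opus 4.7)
The plan is to reduce the pointwise construction to Theorem \ref{local}. Fix $y \in Y$ and let $\iota_y\colon \Spec(\O_{Y,y})\to Y$ be the canonical morphism; taking stalks at $y$ is exactly pullback along $\iota_y$, so $\iota_y^*\colon \Q(Y)\to\M(\O_{Y,y})$ is a cocontinuous tensor functor. The composite $G := \iota_y^* \circ F\colon \Q(X) \to \M(\O_{Y,y})$ is then again a cocontinuous tensor functor, and since $\O_{Y,y}$ is good by Theorem \ref{local}, it is induced by a unique morphism $g_y\colon \Spec(\O_{Y,y})\to X$. Such a morphism is equivalent to the data of a point $x\in X$ (the image of the unique closed point of the source) together with a local homomorphism $\O_{X,x}\to\O_{Y,y}$, under which pullback sends $M$ to $M_x\otimes_{\O_{X,x}}\O_{Y,y}$. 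Setting $f(y) := g_y(\mathfrak{m}_y) = x$ then yields the advertised natural isomorphism $F(M)_y\cong M_{f(y)}\otimes_{\O_{X,f(y)}}\O_{Y,y}$ of tensor functors $\Q(X)\to\M(\O_{Y,y})$.

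For the continuity statement, the strategy is to realize $f^{-1}(V)$ as the complement of the support of a specific quasi-coherent $\O_Y$-module produced from $F$. Given an open $V\subseteq X$, endow $Z := X\setminus V$ with its reduced induced closed subscheme structure; this provides a quasi-coherent ideal $I\subseteq\O_X$ with $V(I)=Z$, so that $V = X\setminus V(I)$. Using the stalkwise formula and the locality of $\O_{X,f(y)}\to\O_{Y,y}$, a short computation gives
\[
F(\O_X/I)_y \;\cong\; (\O_X/I)_{f(y)}\otimes_{\O_{X,f(y)}}\O_{Y,y} \;\cong\; \O_{Y,y}/I_{f(y)}\O_{Y,y},
\]
which vanishes iff $I_{f(y)} = \O_{X,f(y)}$, i.e.\ iff $f(y)\in V$. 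Hence $f^{-1}(V) = Y\setminus\supp F(\O_X/I)$. Since $F$ is cocontinuous and monoidal, $F(\O_X/I) = \operatorname{coker}\bigl(F(I)\to F(\O_X)\bigr) = \operatorname{coker}\bigl(F(I)\to\O_Y\bigr)$ is a quotient of $\O_Y$ by the quasi-coherent ideal image of $F(I)$; its support is therefore closed, and $f^{-1}(V)$ is open.

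The bulk of the work is carried by Theorem \ref{local}; the rest is essentially bookkeeping to assemble the pointwise data into a continuous map. The only detail deserving some care is that $F$, being cocontinuous and monoidal, automatically sends an ideal inclusion $I\hookrightarrow\O_X$ to a morphism whose image is a quasi-coherent ideal of $\O_Y$, so that no finite-type hypothesis on $I$ is needed to guarantee closedness of the relevant support on $Y$.
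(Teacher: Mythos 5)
Your proposal is correct and follows essentially the same route as the paper: the first part composes $F$ with the stalk functor and invokes Theorem \ref{local} together with the factorization of a morphism from $\Spec(\O_{Y,y})$ through $\Spec(\O_{X,x})$, and the continuity argument likewise identifies $f^{-1}$ of a closed set $\supp(\O_X/I)$ with the (closed) support of the quotient $F(\O_X/I)$ of $\O_Y$, using locality of $\O_{X,x}\to\O_{Y,y}$. The only cosmetic difference is that you phrase the continuity check via open sets and their reduced closed complements, while the paper works directly with closed sets of the form $\supp(\O_X/I)$.
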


\begin{proof}
Let $y \in Y$. The pullback functor associated to the canonical morphism $\Spec(\O_{Y,y}) \to Y$ is given by $N \mapsto N_y$. Since the local ring $\O_{Y,y}$ is good (Theorem \ref{local}), the composition $F_y: \Q(X) \to \M(\O_{Y,y})$ is induced by a morphism $\Spec(\O_{Y,y}) \to X$. This factors as $\Spec(\O_{Y,y}) \to \Spec(\O_{X,x}) \to X$ for some $x \in X$ and some local homomorphism $\O_{X,x} \to \O_{Y,y}$ (\cite[2.5.3]{EGAI}), which proves the first part of the statement. 

For the second part, let $I \subseteq \O_X$ be a quasi-coherent ideal. Then $\O_X \to \O_X/I$ gets mapped by $F$ to an epimorphism, say $\O_Y \to \O_Y/J$ for some quasi-coherent ideal $J \subseteq \O_Y$. For $y \in Y$ and $x:=f(y)$, we have an isomorphism
\[(\O_Y/J)_y \cong (\O_X/I)_{x} \otimes_{\O_{X,x}} \O_{Y,y}.\]
Since $\O_{X,x} \to \O_{Y,y}$ is local, this shows that $\O_Y/J$ vanishes at $y$ if and only if $\O_X/I$ vanishes at $x$. We arrive at $f^{-1}(\supp(\O_X/I))=\supp(\O_Y/J)$, so $f$ is continuous.
\end{proof}

\begin{thm} \label{good}
Every scheme is good.
\end{thm}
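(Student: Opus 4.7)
The plan is to localize on $Y$ using the pointwise data assembled in Proposition \ref{map}, and then glue. Let $F : \Q(X) \to \Q(Y)$ be a cocontinuous tensor functor with $X$ quasi-compact quasi-separated, and let $f : Y \to X$ be the continuous set-theoretic map produced by Proposition \ref{map}. Since $X$ is quasi-separated, Proposition \ref{fullfaith} together with Remark \ref{fpqc} shows that goodness is Zariski-local on $Y$; it therefore suffices, for each $y \in Y$, to find an affine open neighborhood $V \subseteq Y$ of $y$ and a morphism $V \to X$ inducing $F|_V : \Q(X) \to \Q(V)$.

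Given $y$, I choose $V$ small enough that $f(V) \subseteq U$ for some affine open $U \subseteq X$: pick any affine open $U \ni f(y)$ in $X$, use continuity of $f$ to form the open neighborhood $f^{-1}(U) \ni y$, and pick an affine open $V \subseteq f^{-1}(U)$ containing $y$. Then $i_U : U \hookrightarrow X$ is a quasi-compact open immersion (since $X$ is quasi-separated), and I claim that $F|_V$ is $i_U$-local in the sense of Corollary \ref{immersion}. Concretely, if $\phi$ is a morphism in $\Q(X)$ whose restriction to $U$ is an isomorphism, then $F(\phi)|_V$ is an isomorphism in $\Q(V)$, as can be checked stalkwise at each $y' \in V$: the stalk formula of Proposition \ref{map} gives $F(\phi)_{y'} \cong \phi_{f(y')} \otimes_{\O_{X,f(y')}} \O_{Y,y'}$, and $\phi_{f(y')}$ is an isomorphism because $f(y') \in U$.

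By Corollary \ref{immersion} we may therefore factor $F|_V \cong G \circ i_U^*$ for a cocontinuous tensor functor $G : \Q(U) \to \Q(V)$. Both $U$ and $V$ are affine, so Corollary \ref{afftens} yields a morphism $g : V \to U$ with $G \cong g^*$, and thus $F|_V$ is induced by the composition $V \xrightarrow{g} U \hookrightarrow X$. By the uniqueness part of Proposition \ref{fullfaith} (applied on each overlap) these local morphisms agree where they must and glue, via Remark \ref{fpqc}, to a global morphism $Y \to X$ inducing $F$.

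The substantive work has already been carried out in Proposition \ref{map} and in the local-ring case, so the only genuinely new step here is the stalkwise verification of $i_U$-locality, which depends essentially on the stalk formula of Proposition \ref{map}; after that, the argument is mere assembly via Corollaries \ref{immersion} and \ref{afftens} together with Zariski gluing. The main thing to watch is that the affine open $V$ be chosen small enough for $f(V)$ to land in a single affine $U$, so that a \emph{single} quasi-compact open immersion $i_U$ governs the locality condition applied to $F|_V$.
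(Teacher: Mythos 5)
Your proposal is correct and follows essentially the same route as the paper: reduce via Remark \ref{fpqc} to a small open $V$ mapped by the continuous map $f$ of Proposition \ref{map} into an affine open $U \subseteq X$, verify $U$-locality stalkwise using the local-ring case, and conclude with Corollaries \ref{immersion} and \ref{afftens}. The only cosmetic difference is that you phrase the stalkwise check via the explicit stalk formula of Proposition \ref{map} rather than by saying that $F_y$ is induced by a morphism factoring through $U$, which is the same argument.
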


\begin{proof}
Let $F : \Q(X) \to \Q(Y)$ be a cocontinuous tensor functor. By Proposition \ref{map} we may associate to it a continuous map $f : Y \to X$ (not a morphism yet). Cover $Y$ with open subsets which get mapped into affine open subsets of $X$. Since we may work locally on $Y$ (Remark \ref{fpqc}), we may assume that $f$ factors as $Y \to U \hookrightarrow X$ for some affine open subscheme $i : U \hookrightarrow X$. We claim that $F$ is $U$-local: Since isomorphisms can be checked stalkwise, it is enough to prove the claim for $F_y : \Q(X) \to \M(\O_{Y,y})$, where $y \in Y$. By Theorem \ref{local}, $F_y$ is induced by some morphism $g : \Spec(\O_{Y,y}) \to X$. As a continuous map $g$ is just a restriction of $f$ and therefore factors through $U$. Thus $F_y$ is $U$-local.
\end{proof}

In other words (cf. \ref{remgood}), we have proven:

\begin{thm}\label{main}
Let $X$ be a quasi-compact quasi-separated scheme, and $Y$ an arbitrary scheme. Then, the functor $f\mapsto f^*$ implements an equivalence
\[\Hom(Y,X)\simeq \Hom_{c\otimes}\bigl(\Q(X),\Q(Y)\bigr).\]
\end{thm}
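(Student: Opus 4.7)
At this stage of the paper the theorem is essentially a formality: the two halves of the asserted equivalence have already been established separately in the previous subsections. My plan is simply to assemble them, following the logic made explicit in Remark \ref{remgood}.

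For the full faithfulness part, I would invoke Proposition \ref{fullfaith} directly. Given any two morphisms $f, g : Y \to X$ and any tensor natural transformation $\alpha : f^* \Rightarrow g^*$, that proposition forces $f = g$ and $\alpha = \id$. Because $\Hom(Y,X)$ is regarded as a discrete category, this is exactly the statement that the pullback functor $f \mapsto f^*$ is fully faithful on objects and morphisms.

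For essential surjectivity, I would invoke Theorem \ref{good}, which asserts that every scheme is good. Unpacking the definition of goodness, this means that for any cocontinuous tensor functor $F : \Q(X) \to \Q(Y)$ (with $X$ quasi-compact quasi-separated) there exists a morphism $f : Y \to X$ with $f^* \cong F$. Together with the full faithfulness above, this gives the desired equivalence of categories.

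The genuine obstacle was therefore already resolved earlier, in the proof of Theorem \ref{good}: one reduces via Proposition \ref{map} to the local ring case (Theorem \ref{local}), which in turn rests on the weak $U$-locality analysis of Lemmas \ref{ideal}, \ref{weak}, \ref{pairing} and \ref{iso}, and then uses the Zariski-local nature of goodness recorded in Remark \ref{fpqc} to pass to arbitrary $Y$. Once this is granted, the proof of Theorem \ref{main} itself consists of nothing more than citing Proposition \ref{fullfaith} and Theorem \ref{good} in succession.
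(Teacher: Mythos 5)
Your proposal is correct and follows the paper's own route exactly: the paper derives Theorem \ref{main} as an immediate restatement of Theorem \ref{good} via Remark \ref{remgood}, with Proposition \ref{fullfaith} supplying full faithfulness. Your accounting of where the real work lies (Proposition \ref{map}, Theorem \ref{local}, and the locality lemmas) matches the paper's structure as well.
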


In conclusion, the category of quasi-compact quasi-separated schemes (regarded as a 2-category with only identity 2-morphisms) embeds fully faithfully into the dual of the 2-category of cocomplete tensor categories. As explained in the introduction, this means that ordinary algebraic geometry is in a certain sense ``2-affine''; we will elaborate in the next section.

\begin{rem}
The same proof works when $Y$ is an arbitrary locally ringed space, in fact we have
\[\Hom(Y,X)\simeq \Hom_{c\otimes}\bigl(\Q(X),\M(Y)\bigr)\]
and every cocontinuous tensor functor $\Q(X) \to \M(Y)$ already maps into $\Q(Y)$ as defined in (\cite[Chapitre 0, 5.1.3]{EGAI}).
\end{rem}

\begin{cor}
Let $X$ be a quasi-compact quasi-separated scheme, and $Y$ an arbitrary locally ringed space. If $\Q_{\mathrm{fp}}(X)$ denotes the category of quasi-coherent modules of finite presentation on $X$, then $f \mapsto f^*$ implements an equivalence between the discrete category $\Hom(Y,X)$ and the category of right exact tensor functors $\Q_{\mathrm{fp}}(X) \to \Q_{\mathrm{fp}}(Y)$.
\end{cor}
 
\begin{proof}
This follows from Theorem \ref{main} and (\cite[3.11]{Br}).
\end{proof}

\begin{ex}
Here is a simple application of our main theorem. Let $X$, $Y$ be as above and let $g : Y \to X$ be a morphism \emph{of the underlying ringed spaces}, i.e. it is not required that the induced homomorphisms $g^\#_y : \O_{X,g(y)} \to \O_{Y,y}$ on local rings  are local. Nevertheless, we can still define a cocontinuous tensor functor $g^* : \Q(X) \to \Q(Y)$ (\cite[Chapitre 0, 5.1.4]{EGAI}). By Theorem \ref{main} there is a unique morphism $f : Y \to X$ \emph{of locally ringed spaces} such that $g^* \cong f^*$. Actually this also holds when $X$ is an arbitrary scheme. Let us outline three (equivalent) descriptions of $f$, the first one actually coming from the proof of the theorem:
\begin{enumerate}
\item By gluing it is enough to consider the case that $X=\Spec(A)$ is affine. Then $g$ yields on global sections a homomorphism $A \to \Gamma(Y,\O_Y)$, which corresponds to a morphism of locally ringed spaces $f : Y \to \Spec(A)$ (\cite[1.6.4]{EGAI}).
\item For $y \in Y$ the homomorphism $g^\#_x : \O_{X,g(y)} \to \O_{Y,y}$ pulls the maximal ideal of $\O_{Y,y}$ back to some prime ideal $\mathfrak{p}$ of $\O_{X,g(y)}$. The canonical morphism $\Spec(\O_{X,g(y)}) \to X$ (\cite[2.5.1]{EGAI}) maps it to a point $f(y)$ satisfying $g(y) \prec f(y)$. This defines a continuous map $f : Y \to X$. The maps on stalks are defined by $f^\#_y : \O_{X,f(y)} =  (\O_{X,g(y)})_{\mathfrak{p}} \to \O_{Y,y}$, which are local by construction. They extend to a sheaf homomorphism $\O_X \to f_* \O_Y$.
\item The forgetful functor $U$ from locally ringed spaces to ringed spaces has a right adjoint functor $R$ (\cite{Gil}). This adjunction induces a monad on the category of locally ringed spaces. Every scheme $X$ has a canonical module structure $R(U(X)) \to X$ with respect to this monad, which eventually comes down to the existence of canonical morphisms $\Spec(\O_{X,x}) \to X$. Now, if $Y$ is a locally ringed space, then a morphism of ringed spaces $g : U(Y) \to U(X)$ induces a morphism of locally ringed spaces
\[f : Y \to R(U(Y)) \to R(U(X)) \to X.\]
\end{enumerate}
\end{ex}


\subsection{Categorification}

In this section we outline the categorification process alluded to above. For basics on $2$-categories, we refer the reader to \cite[Section 9]{St}.

\begin{defi}\label{cat} \noindent \begin{enumerate}
\item A \textit{$2$-ring} is a cocomplete tensor category. The $2$-category of $2$-rings is denoted by $\zRing$.
\item The 1-opposite of $\zRing$ is called the $2$-category of \textit{$2$-affine schemes}.
\item A \textit{stack} is a pseudofunctor $\Sch^{\op} \to \Cat$ which satisfies effective descent with respect to the fpqc topology. Together with natural transformations and modifications, we obtain the $2$-category of stacks, denoted by $\Stack$.
\item Let $C$ be a $2$-ring. Its \textit{spectrum} $\SPEC(C)$ is defined by \[\SPEC(C)(X) = \Hom_{c\otimes}(C,\Q(X)),\ X \in \Sch.\] This is indeed a stack by descent theory for quasi-coherent modules (\cite[4.23]{Vis}), and this construction provides us with a 2-functor $\SPEC$ from $\zRing^{\op}$ to $\Stack$.
\item Let $F$ be a stack. The \textit{$2$-ring of regular $2$-functions} $\OO(F)$ is defined as $\Hom(F,\Q(-))$. This means that a regular $2$-function is given by functors $M_X : F(X) \to \Q(X)$ for every scheme $X$, which are compatible with base change, which means that there are coherent isomorphisms $f^* \circ M_X \cong M_Y \circ f^*$ for morphisms $f : Y \to X$. These isomorphisms also belong to the data. We obtain a 2-functor $\OO : \Stack \to \zRing^{\op}$.
\end{enumerate}
\end{defi}

\begin{rem}
If $F$ is an algebraic stack, then our definition of a $2$-regular function on $F$ coincides with the usual definition of a quasi-coherent module on $F$ (\cite[7.18]{V2}). Thus, we interpret quasi-coherent modules as categorified regular functions. As usual, $2$-regular functions are ``added'' and ``multiplied'' pointwise; similarly, colimits are computed pointwise. If $F$ is representable by a scheme $X$, then the $2$-Yoneda lemma implies $\OO(F) \simeq \Q(X)$. For a scheme $X$, we may therefore write $\OO(X)$ instead of $\Q(X)$.
\end{rem}

\begin{rem}
It will be obvious to the alert reader that in Definition \ref{cat} we have ignored some rather serious set-theoretic issues. For example, if $F$ is a stack as defined in part 3, i.e. a 2-functor defined on the category of \emph{all} schemes, then $\OO(F)$ as defined in part 5 might, in principle, be ``too large'' to be a category, in the sense that its set of objects is outside of some universe fixed beforehand (if one chooses Grothendieck universes as the set-theoretic framework for algebraic geometry). 

There are ways of getting around these difficulties, such as defining stacks only on small sites of schemes (rather than all schemes), and similarly bounding the ``size'' of our 2-rings in some sense. However, since the aim of this section is not to develop a theory in a rigorous and consistent manner but rather to outline a heuristic principle, we will not go into the details of how the issues could be resolved, and will continue to ignore them throughout the remainder of the section. 
\end{rem}

The following categorifies a well-known adjunction (\cite[1.6.3]{EGAI}):

\begin{prop}
If $F$ is a stack and $C$ is a $2$-ring, then there is a natural equivalence
\[\Hom(F,\SPEC(C)) \simeq \Hom(C,\OO(F)).\]
Thus, $\SPEC : \zRing^{\op} \to \Stack $ is right adjoint to $\OO : \Stack \to \zRing^{\op}$.
\end{prop}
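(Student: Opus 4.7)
The plan is to construct the equivalence directly by a currying/Yoneda manoeuvre, exploiting the key structural fact that both colimits and the tensor product on $\OO(F) = \Hom(F,\Q(-))$ are inherited pointwise, i.e., computed coordinatewise in $\Q(X)$ for each scheme $X$ and each section $\xi \in F(X)$. Once this is in place, each side of the proposed equivalence is a different repackaging of the same data: a family of cocontinuous tensor functors $C \to \Q(X)$ indexed by pairs $(X,\xi)$ and natural in $(X,\xi)$.

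Explicitly: a morphism of stacks $\phi : F \to \SPEC(C)$ sends each pair $(X,\xi)$ with $\xi \in F(X)$ to a cocontinuous tensor functor $\phi_{X,\xi} : C \to \Q(X)$ depending naturally on $(X,\xi)$. Currying in $c \in C$ yields
\[\widetilde{\phi}(c) := \bigl((X,\xi) \mapsto \phi_{X,\xi}(c)\bigr) \in \OO(F),\]
whence a functor $\widetilde{\phi} : C \to \OO(F)$; its cocontinuity and tensor-functor property descend from the $\phi_{X,\xi}$ via pointwise-ness. Conversely, a cocontinuous tensor functor $\psi : C \to \OO(F)$ yields $\widehat{\psi}_X(\xi) := \mathrm{ev}_{X,\xi} \circ \psi$, where $\mathrm{ev}_{X,\xi} : \OO(F) \to \Q(X)$ sends $M \mapsto M_X(\xi)$; this $\mathrm{ev}_{X,\xi}$ is a cocontinuous tensor functor by the same pointwise-ness argument, so $\widehat{\psi}_X(\xi) \in \SPEC(C)(X)$. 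The two constructions are pseudo-inverse by inspection, and natural in $F$ and $C$.

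The main obstacle is not the adjunction itself but its structural premise: namely, that $\OO(F)$ carries a pointwise $2$-ring structure and that each $\mathrm{ev}_{X,\xi}$ is a cocontinuous tensor functor. This is a categorified analogue of the classical fact that a functor category into a cocomplete symmetric monoidal category inherits those structures pointwise, and verifying it requires careful bookkeeping of the coherence data for the symmetric monoidal structure but introduces no new ideas. As a sanity check, when $F = \Hom(-,X)$ is representable by a scheme $X$, the $2$-Yoneda lemma gives $\OO(F) \simeq \Q(X)$ and the equivalence reduces to the tautology $\SPEC(C)(X) = \Hom_{c\otimes}(C,\Q(X))$; the general statement is simply the extension of this representable case to arbitrary stacks, exactly as one expects for an adjunction of this shape.
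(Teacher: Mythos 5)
Your proposal is correct and is essentially the paper's own argument spelled out in more detail: the paper simply declares the adjunction ``entirely formal'' (an instance of higher Isbell duality) and notes that both the unit $F \to \SPEC(\OO(F))$ and the counit $C \to \OO(\SPEC(C))$ are given by evaluation, which is exactly the currying/evaluation correspondence you describe, together with the (tacitly assumed) pointwise $2$-ring structure on $\OO(F)$.
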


\begin{proof}
This is entirely formal and an example of (higher) Isbell duality. For a stack $F$ and a scheme $X$, the component $F(X) \to \Hom(\OO(F),\OO(X))$ of the unit $\eta_F : F \to \SPEC(\O(F))$ is defined to be the obvious evaluation. Similarly, the counit $\varepsilon_C : C \to \OO(\SPEC(C))$ is given by evaluation.
\end{proof}

\begin{defi}
A stack $F$ is called \textit{$2$-affine} if the unit $\eta_F : F \to \SPEC(\OO(F))$ is an equivalence, i.e. for every scheme $X$ we have $F(X) \simeq \Hom(\OO(F),\OO(X))$. In \cite{Br} this notion was called \emph{tensorial}. A $2$-ring $C$ is called \textit{stacky} if the counit $C \to \OO(\SPEC(C))$ is an equivalence.
\end{defi}

Every adjunction restricts to an equivalence between its fixed points. Thus:

\begin{prop}
The functors $\SPEC$ and $\OO$ provide an anti-equivalence of $2$-categories between $2$-affine stacks and stacky $2$-rings. In particular, a $2$-affine scheme is completely determined by its $2$-ring of $2$-regular functions.
\end{prop}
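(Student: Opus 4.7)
The plan is to treat this as a purely formal consequence of the adjunction $\OO \dashv \SPEC$ established in the preceding proposition, together with the general 2-categorical principle that any adjunction restricts to an (anti-)equivalence between the two full sub-2-categories of ``fixed objects'' on which the unit, respectively counit, is an equivalence. So the bulk of the argument is bookkeeping about units and counits, and there is no serious geometric content here beyond what was already invested in the adjunction.

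Concretely, I would first recall the triangle identities for the $2$-adjunction between $\OO$ and $\SPEC$: up to coherent $2$-isomorphism, the composites
\[\OO(F) \xrightarrow{\epsilon_{\OO(F)}} \OO(\SPEC(\OO(F))) \xrightarrow{\OO(\eta_F)} \OO(F)\]
and
\[\SPEC(C) \xrightarrow{\eta_{\SPEC(C)}} \SPEC(\OO(\SPEC(C))) \xrightarrow{\SPEC(\epsilon_C)} \SPEC(C)\]
are equivalent to the identity. From this I would deduce the stability statements: if $F$ is $2$-affine, so that $\eta_F$ is an equivalence, then $\OO(\eta_F)$ is an equivalence, hence by the first triangle identity $\epsilon_{\OO(F)}$ is an equivalence, i.e.\ $\OO(F)$ is stacky. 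Dually, if $C$ is stacky, the second triangle identity shows $\eta_{\SPEC(C)}$ is an equivalence, i.e.\ $\SPEC(C)$ is $2$-affine. Thus $\OO$ and $\SPEC$ restrict to $2$-functors between the sub-$2$-categories of $2$-affine stacks and stacky $2$-rings.

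To conclude the anti-equivalence, I would simply observe that on these restricted sub-$2$-categories the unit $\eta$ and counit $\epsilon$ are, by the very definitions of ``$2$-affine'' and ``stacky'', equivalences at every object. Since an adjunction whose unit and counit are pointwise equivalences is an (adjoint) equivalence of $2$-categories, this produces the desired anti-equivalence between $2$-affine stacks and stacky $2$-rings. The final sentence of the statement is immediate: a $2$-affine scheme (viewed as a $2$-affine stack via its functor of points) is recovered, up to equivalence, from $\OO(F)$ via $\SPEC$, so it is determined by its $2$-ring of $2$-regular functions.

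The only point requiring real care — and the step I expect to be the main potential obstacle — is the $2$-categorical character of the argument: one must apply the triangle identities and the ``fixed-point'' lemma in the weak $2$-categorical setting, tracking coherent $2$-isomorphisms rather than on-the-nose equalities. This is standard but verbose; given the disclaimer earlier in the section that set-theoretic and size issues will be glossed over, I would similarly state the formal $2$-adjunction fixed-point lemma without re-proving it from first principles, and refer to the general framework of $2$-adjunctions (e.g.\ \cite{St}) for the coherence data.
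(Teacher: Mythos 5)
Your proposal is correct and follows exactly the paper's route: the paper's entire justification is the remark that every adjunction restricts to an equivalence between its fixed points, and your argument is simply the careful unwinding of that principle via the triangle identities for the $\OO \dashv \SPEC$ adjunction. The only difference is that you spell out the bookkeeping the paper leaves implicit.
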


Now our main Theorem \ref{main} reads as follows:

\begin{thm}
Every quasi-compact quasi-separated scheme is $2$-affine.
\end{thm}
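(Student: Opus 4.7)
The plan is to unwind the definitions so that the statement becomes a direct consequence of Theorem \ref{main}. First, I would identify a quasi-compact quasi-separated scheme $X$ with the representable $2$-functor $\Sch^{\op}\to\Cat$ sending $Z$ to the discrete category $\Hom(Z,X)$. Because $\Q(-)$ takes the same value on a scheme whether we regard it as a representable stack or not, the $2$-Yoneda lemma recalled just after Definition \ref{cat} identifies $\OO(X)$ with $\Q(X)$ as $2$-rings. Under this identification, the unit $\eta_X : X\to\SPEC(\OO(X))$, whose component at a test scheme $Z$ is given in the proof of the Isbell-duality adjunction by evaluation, becomes the functor
\[\Hom(Z,X)\longrightarrow \Hom_{c\otimes}\bigl(\Q(X),\Q(Z)\bigr),\qquad f\longmapsto f^*.\]

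It then suffices to show that this is an equivalence of categories for each scheme $Z$. But this is exactly the statement of Theorem \ref{main}: the left-hand side is a discrete set, the right-hand side is the category of cocontinuous tensor functors and tensor natural transformations, and $f\mapsto f^*$ is the asserted equivalence. Naturality in $Z$, needed to promote the pointwise equivalence to an equivalence of stacks, follows from the standard compatibility $(f\circ g)^*\cong g^*\circ f^*$ of pullback with composition, together with the observation that both $\Hom(-,X)$ and $\SPEC(\Q(X))=\Hom_{c\otimes}\bigl(\Q(X),\Q(-)\bigr)$ are stacks on $\Sch$ in the Zariski (in fact fpqc) topology, as already recorded in Remark \ref{fpqc}.

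The only mildly delicate point, and thus the place where I would be most careful, is the bookkeeping that shows the $2$-Yoneda identification $\OO(X)\simeq\Q(X)$ to be an equivalence of $2$-rings (not merely of underlying categories), and that under this identification the unit $\eta_X$ really is the pullback functor $f\mapsto f^*$. Both reduce to the fact that $\OO$ is defined pointwise through $\Q$ and that the tensor structure on $\OO(X)$ imposed in Definition \ref{cat} is pulled back from this pointwise description, so there is no additional content beyond checking that the coherence data assemble correctly. Once this is in place, the conclusion that $\eta_X$ is an equivalence of stacks, and hence that $X$ is $2$-affine in the sense of the preceding definition, is immediate from Theorem \ref{main}.
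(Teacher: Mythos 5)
Your proposal is correct and follows exactly the route the paper intends: the theorem is stated as a direct reformulation of Theorem \ref{main}, using the $2$-Yoneda identification $\OO(X)\simeq\Q(X)$ and the identification of the unit $\eta_X$ with $f\mapsto f^*$, which is all the paper's (essentially one-line) proof amounts to. Your extra care about the tensor-structure bookkeeping and naturality in $Z$ is a welcome elaboration but introduces no new ideas beyond what the paper takes for granted.
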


In conclusion, although a general scheme $X$ cannot be recovered from its $1$-ring of regular functions $\Gamma(X,\O_X)$, it can nevertheless be recovered from its $2$-ring of $2$-regular functions $\OO(X)$, which is just its category of quasi-coherent modules. It would be interesting to know whether every Artin stack satisfying certain natural finiteness conditions (such as quasi-compactness and quasi-separatedness) is $2$-affine, or to have an intrinsic characterization for stacky $2$-rings; these and other similar problems arising naturally in this context will appear in the thesis of the first author.

We conclude this section with another example of the kind of notion that we believe should play an important role in categorified algebraic geometry. 

A morphism of quasi-compact quasi-separated schemes $f : X \to S$ is affine if and only if the canonical morphism $\SPEC(f_*(\O_X)) \to X$ over $S$ is an isomorphism. This motivates:

\begin{defi}
A morphism $f : X \to S$ of $2$-affine schemes is called $1$-affine if the corresponding cocontinuous tensor functor $F : C \to D$ satisfies the following condition: There is a right adjoint $F_*$ of $F$, and the canonical functor $\M(F_*(1_D)) \to D$, defined by $M \mapsto F(M) \otimes_{F(F_*(1_D))} 1_D$ (cf. \cite[Section 4]{Br}), is an equivalence of categories.
\end{defi}

Then it is easy to see that a morphism of quasi-compact quasi-separated schemes $f : X \to S$ is affine in the usual sense if and only if it is $1$-affine when considered as a morphism of $2$-affine schemes, i.e. upon identifying $f$ with its pullback functor $f^*$. As usual we also have an absolute notion:

\begin{defi}
Let $R$ denote our base ring. A $2$-affine scheme $X$ is called $1$-affine if the unique morphism from $X$ to the $2$-affine scheme corresponding to $\M(R)$ is $1$-affine. In other words, if $C$ is the $2$-ring corresponding to $X$, the canonical cocontinuous tensor functor $\M(\End_C(1_C))) \to C$ (cf. Proposition \ref{modtens}) is an equivalence of categories.
\end{defi}

Hence, a quasi-compact quasi-separated scheme $X$ is $1$-affine when considered as an $2$-affine scheme, i.e. upon identifying $X$ with $\OO(X) \cong \Q(X)$, if and only if $X$ is affine in the usual sense.


\section{Generalized schemes}


\subsection{Preliminaries and conventions}

Nikolai Durov (\cite{Du}) has developed a theory of \textit{generalized rings} and \textit{generalized schemes}, which contains, among others, the theory of classical schemes as developed in \cite{EGAI}, as well as the more recent theory of monoid schemes (\cite{CoCo}, \cite{Vez}, \cite{T}, \cite{PeLo}). Originally motivated by Arakelov geometry, Durov's theory provides an amazingly general framework for algebraic geometry. For a summary, the reader may consult \cite[Chapter 1]{Du} or \cite{Fre}.

In this section we would like to indicate how the results in the main portion of the paper (and especially Theorem \ref{main}) can be adapted to this general context. In fact, in most cases, the same proofs can be used. We will refrain from developing the algebraic geometry of generalized schemes in the style of \cite{EGAI}. Instead, we would like to argue briefly that essentially the same techniques that were applied for ordinary schemes will also work for generalized schemes, with only minor modifications and a minimal amount of effort. We do this in order to highlight the fact that not only ordinary algebraic geometry, but also ``absolute'' or $\mathds{F}_1$-algebraic geometry can be part of the categorification process glimpsed in the previous sections.  

In the following we will assume that the reader is familiar with Durov's theory, as in Chapters 5 and 6 of \cite{Du}. We will use the unary localization theory and unary prime spectra as developed in \cite[6.1, 6.2]{Du}, and hence will not have to deal with more sophisticated notions of spectrum, such as the one resulting from the total localization theory of \cite[6.3]{Du}. In particular, if $A$ is a generalized ring, $\Spec(A)$ is just the set of prime ideals in $|A|=A(1)$ (set of unary operations of $A$; cf. \cite[4.2.1]{Du}) endowed with the usual Zariski topology and the sheaf of generalized rings satisfying $\mathcal{O}(D(f))=A_f$ for all $f \in |A|$.
 
In addition, we would like to work over the base $\mathds{F}_1$. This means that every scheme is defined over $\Spec(\mathds{F}_1)$ and that every generalized ring $A$ is an $\mathds{F}_1$-algebra, i.e. it has a zero (\cite[6.5.6]{Du}). This implies in particular that the category $\M(A)$ of $A$-modules has a zero object and that we can talk about cokernels and kernels. For example, for a submodule $U$ of an $A$-module $M$, we define $M/U$ to be the cokernel of the inclusion map $U \hookrightarrow M$; this is nothing but the coequalizer of the inclusion $U \hookrightarrow M$ and the zero map $U\to M$. The same remarks apply to $\Q(X)$, the category of quasi-coherent modules on a generalized scheme $X$.

The notion of quasi-compact morphism/scheme \cite[6.5.15]{Du} gives rise to the usual notion of quasi-separated morphism/scheme: A morphism is quasi-separated if its diagonal is quasi-compact. As before, we will restrict to quasi-compact quasi-separated schemes, mainly because \cite[6.9.9]{EGAI} tells us that in this situation every quasi-coherent module is the directed colimit of its quasi-coherent submodules of finite type, which was an essential ingredient in our proof of Lemma \ref{iso}. The same proof works for generalized schemes, the only nontrivial ingredient being the fact that for a quasi-compact quasi-separated morphism $f$, the direct image functor $f_*$ preserves quasi-coherence. The proof in \cite[6.7.1]{EGAI} can easily be translated to generalized schemes: Instead of considering the kernel of a difference of two module homomorphisms, consider their equalizer. To mention a related example, in the computation of the structural sheaf of an affine scheme (\cite[1.4.1]{EGAI}), just replace the equations $f^{m_{ij}} (s_i - s_j) = 0$ with $f^{m_{ij}} s_i = f^{m_{ij}} s_j$. This equation then makes sense in a generalized ring, which does not come equipped with an addition operation, let alone a subtraction. This reveals a general recipe for the translation of proofs in \cite{EGAI} to generalized schemes: Write $x = y$ instead of $x - y = 0$. It is interesting to note in this context that oftentimes, the addition in a ring is not as ``fundamental'' as its multiplication and its unit are for the development of commutative algebra and algebraic geometry (cf. \cite[4.8]{Du}); this is precisely the reason why Durov's theory works so well. 

We hope that this brief discussion convinces the reader that at least those results from \cite{EGAI} which were used above in the proof of Theorem \ref{main}, as well as those which were needed in their proofs, are easily adaptable to generalized schemes.


\subsection{Translation of the proofs}

Now let us go through Sections $2$ and $3$ above in the generalized scheme setting.

We consider (cocomplete) symmetric monoidal categories and (cocontinuous) symmetric monoidal functors between them and for consistency, call them (cocomplete) tensor categories and (cocontinuous) tensor functors. For every generalized scheme $X$ the category of quasi-coherent modules $\Q(X)$ is a cocomplete tensor category with unit $|\O_X|$. Note that $\O_X$ itself is not a quasi-coherent module, but rather a sheaf of generalized rings. Every morphism of generalized schemes $f : Y \to X$ induces a cocontinuous tensor functor $f^* : \Q(X) \to \Q(Y)$, and our aim will be to show that as before, this implements an equivalence of categories 
\[\Hom(Y,X)\simeq \Hom_{c\otimes}\bigl(\Q(X),\Q(Y)\bigr)\]
when $X$ is quasi-compact and quasi-separated. 

We will not fix a base ring and do \emph{not} require our tensor categories to be linear. This will force us to find a more direct proof of the universal property of $\M(A)$ as a cocomplete tensor category, thereby generalizing Proposition \ref{modtens}. In order to do this, we will have to find a \emph{generalized} ring associated to a cocomplete tensor category $C$, playing the role of $\End(1_C)$ in this general setting. The following construction achieves this goal, and generalizes \cite[4.3.8]{Du}:

\begin{defi}
Let $C$ be a category with coproducts, and $M \in C$. Define the algebraic monad $\END(M)$ as follows: For every $n \in \mathds{N}$ let
 \[\END(M)(n) := \Hom(M,M^{\oplus n}).\]
For $t \in \END(M)(k)$ and $t_1,\dotsc,t_k \in \END(M)(n)$ define $t(t_1,\dotsc,t_k) \in \END(M)(n)$ as follows: By the universal property of the direct sum, the $k$ morphisms $t_i$ correspond to a morphism $(t_1,\dotsc,t_k) : M^{\oplus k} \to M^{\oplus n}$. Compose this with $t : M \to M^{\oplus k}$ to obtain a morphism $t(t_1,\dotsc,t_k) : M \to M^{\oplus n}$.

It is straightforward to check the cosimplicial identities (\cite[4.3.2]{Du}). Therefore, $\END(M)$ is, indeed, an algebraic monad.  
\end{defi}

\begin{lemma}
Let $C$ be a cocomplete tensor category. Then the algebraic monad $\END(1_C)$ is commutative, and hence a generalized ring.
\end{lemma}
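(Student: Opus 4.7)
The plan is to categorify the Eckmann--Hilton argument showing that $\End(1_C)$ is commutative (cf.\ \cite[1.3.3.1]{SaR}), extending it to operations of arbitrary arity. The key ingredients are the interchange law in the symmetric monoidal structure of $C$ and the bicocontinuity of $\otimes$, which together supply a canonical distributivity isomorphism $1_C^{\oplus k} \otimes 1_C^{\oplus \ell} \cong 1_C^{\oplus k\ell}$.

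First I would fix $t \in \END(1_C)(k)$ and $s \in \END(1_C)(\ell)$, i.e.\ morphisms $t : 1_C \to 1_C^{\oplus k}$ and $s : 1_C \to 1_C^{\oplus \ell}$, and form the tensor product morphism $t \otimes s : 1_C \otimes 1_C \to 1_C^{\oplus k} \otimes 1_C^{\oplus \ell}$. Using the unit isomorphism on the source and the distributivity isomorphism on the target, this identifies with a morphism $1_C \to 1_C^{\oplus k\ell}$. The interchange law in a monoidal category provides the two equal factorizations
\[
(t \otimes \id_{1_C^{\oplus \ell}}) \circ (\id_{1_C} \otimes s) \;=\; t \otimes s \;=\; (\id_{1_C^{\oplus k}} \otimes s) \circ (t \otimes \id_{1_C}).
\]
Unpacking each side through the distributivity and unit isomorphisms recognizes one composite as $t^{\oplus \ell} \circ s$ and the other as $s^{\oplus k} \circ t$, i.e.\ the substitutions $s(t,\dots,t)$ and $t(s,\dots,s)$ in $\END(1_C)(k\ell)$, matched up by the shuffle permutation of $1_C^{\oplus k\ell}$ induced by the symmetry of $\otimes$.

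This is the core computation. To upgrade it to Durov's full commutativity axiom for $\END(1_C)$, which matches up arbitrary iterated substitutions, I would repeat the same argument with the auxiliary identity factors $\id_{1_C}$ replaced by a matrix $(a_{ij}) \in \END(1_C)(n)^{k\ell}$ of $n$-ary operations, using bicocontinuity to distribute the $a_{ij}$'s through the tensor products. The outcome is the required identity
\[
t\bigl(s(a_{1,\bullet}),\dots,s(a_{k,\bullet})\bigr) \;=\; s\bigl(t(a_{\bullet,1}),\dots,t(a_{\bullet,\ell})\bigr)
\]
in $\END(1_C)(n)$, which is exactly the condition for $\END(1_C)$ to be a commutative algebraic monad, hence a generalized ring in Durov's sense.

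The main obstacle I anticipate is purely notational: carefully matching the distributivity and symmetry isomorphisms of $C$ to the shuffles implicit in Durov's indexing conventions, and verifying that the auxiliary matrix case is fully subsumed by the base case via bicocontinuity. No deeper input is needed; everything reduces to the interchange law, which is the very mechanism underlying the classical Eckmann--Hilton theorem.
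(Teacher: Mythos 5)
Your proposal is correct and takes essentially the same route as the paper: both proofs reduce the commutativity axiom to the interchange law for $\otimes$ applied to $t \otimes s$, read through the distributivity isomorphism $1_C^{\oplus k} \otimes 1_C^{\oplus \ell} \cong 1_C^{\oplus k\ell}$ supplied by bicocontinuity, i.e.\ a categorified Eckmann--Hilton argument. The only cosmetic difference is in handling arbitrary matrices over arbitrary $\END(1_C)$-modules: the paper notes that it suffices to verify Durov's identity on the universal matrix in $\END(1_C)(n\times m)$ (a case already contained in your base computation), whereas you propose threading a general matrix of operations through the diagram, which works but is slightly less economical.
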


\begin{proof}
Let us write $1:=1_C$. Letting $t \in \END(1)(n)$ and $t' \in \END(1)(m)$, we have to show (\cite[5.1.1]{Du}) that $a=b$ where
\[a=t(t'(x_{11},\dotsc,x_{1m}),\dotsc,t'(x_{n1},\dotsc,x_{nm})),\]
\[b=t'(t'(x_{11},\dotsc,x_{n1}),\dotsc,t'(x_{1m},\dotsc,x_{nm})).\]
Here, $(x_{ij})$ is some $n \times m$-matrix of elements in some $\END(1)$-module $X$. It suffices to take the universal example, which is $X=\END(1)(n \times m)$ together with the matrix $x_{ij} = \{(i,j)\}_{n \times m}$, i.e. $x_{ij} : 1 \to 1^{\oplus (n \times m)}$ is the inclusion of the summand with index $(i,j)$. Unwinding the definitions, $a$ is just the composition
\[\xymatrix@C=35pt{1 \ar[r]^-{t} & 1^{\oplus n} \ar[r]^-{t'^{\oplus n}} & (1^{\oplus m})^{\oplus n} \ar[r]^{\cong} & 1^{\oplus (n \times m)}.}\]
Similarly, $b$ factors as
\[\xymatrix@C=35pt{1 \ar[r]^-{t'} & 1^{\oplus m} \ar[r]^-{t^{\oplus m}} & (1^{\oplus n})^{\oplus m} \ar[r]^{\cong} & 1^{\oplus (n \times m)}.}\]
The following commutative diagram finishes the proof: 
\[\xymatrix@=18pt@R=40pt{(1^{\oplus n})^{\oplus m} \ar[rr]^{\cong} && 1^{\oplus n} \otimes 1^{\oplus m} && (1^{\oplus m})^{\oplus n} \ar[ll]_{\cong} \\
1^{\oplus m} \ar[u]^{t^{\oplus m}} \ar[r]^-{\cong} & 1 \otimes 1^{\oplus m} \ar[ur]^{t \otimes 1^{\oplus m}} & & 1^{\oplus n} \otimes 1 \ar[ul]_{1^{\oplus n} \otimes t'} & 1^{\oplus n} \ar[l]^-{\cong} \ar[u]_{t'^{\oplus n}} \\
1 \ar[rr]_{\cong} \ar[u]^{t'} && 1 \otimes 1 \ar[uu]|{t \otimes t'} \ar[ul]^{1 \otimes t'} \ar[ur]_{t \otimes 1} && 1  \ar[ll]^{\cong} \ar[u]_{t}}\]

Notice that for $n=m=1$ this is the usual proof that $\End(1)$ is a commutative monoid.
\end{proof}

We now state and prove the analogue of Proposition \ref{modtens}:

\begin{prop} \label{modtens2}
For a cocomplete tensor category $C$ and a generalized ring $A$, there is an equivalence of categories
\[\Hom_{c\otimes}(\M(A),C) \simeq \Hom(A,\END(1_C)).\]
\end{prop}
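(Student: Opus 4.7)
The strategy parallels the proof of Proposition \ref{modtens}, but we cannot appeal to the $R$-linear Yoneda machine, so the equivalence must be built by hand from the structure of free $A$-modules. I will construct mutually quasi-inverse functors $\Phi:\Hom_{c\otimes}(\M(A),C)\to\Hom(A,\END(1_C))$ and $\Psi$ in the opposite direction, the right-hand side being viewed as a discrete category.

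For the forward map $\Phi$: given a cocontinuous tensor functor $F:\M(A)\to C$, the unit axiom provides an isomorphism $F(A)\cong 1_C$, and cocontinuity promotes it to $F(A^{\oplus n})\cong 1_C^{\oplus n}$. Since the free $A$-module on $n$ generators is exactly $A^{\oplus n}=A(n)$, a morphism $A\to A^{\oplus n}$ in $\M(A)$ is precisely an element of $A(n)$; applying $F$ defines $\Phi(F)_n:A(n)\to\END(1_C)(n)$. Functoriality of $F$ forces this assignment to respect composition of operations, hence $\Phi(F)$ is a morphism of algebraic monads, and commutativity is inherited from both source and target.

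For the inverse $\Psi$: given $\phi:A\to\END(1_C)$, set $\Psi(\phi)(A^{\oplus I}):=1_C^{\oplus I}$ on free modules; a morphism $A^{\oplus I}\to A^{\oplus J}$ is an $I$-indexed family of elements of $A(J)$, which $\phi$ sends to an $I$-indexed family of morphisms $1_C\to 1_C^{\oplus J}$, assembling into a morphism $1_C^{\oplus I}\to 1_C^{\oplus J}$ by the universal property of the coproduct. Every $A$-module $M$ admits a canonical presentation as a reflexive coequalizer of free modules (the bar resolution of the underlying monad of $A$), and we extend $\Psi(\phi)$ by taking the coequalizer of the image diagram in $C$. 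Cocontinuity is then automatic. The tensor structure on free modules is dictated by $A^{\oplus I}\otimes_A A^{\oplus J}=A^{\oplus(I\times J)}$ matching $1_C^{\oplus I}\otimes 1_C^{\oplus J}\cong 1_C^{\oplus(I\times J)}$, and extends to all modules by cocontinuity, with compatibility of the symmetry constraints coming from the commutativity of both $A$ and $\END(1_C)$.

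For inverseness, $\Phi\Psi(\phi)=\phi$ holds by direct unwinding of the constructions. For $\Psi\Phi(F)\cong F$, both functors are cocontinuous tensor functors agreeing on $A$ and on morphisms between free modules, so they agree on all of $\M(A)$ since $A$ generates under colimits. To see that $\Hom_{c\otimes}(\M(A),C)$ is indeed essentially discrete (matching the discrete right-hand side), one checks that any tensor natural transformation $\alpha:F\Rightarrow G$ is determined by $\alpha_A$, which is forced by the unit axiom to agree with the canonical isomorphism $F(A)\cong 1_C\cong G(A)$; such an $\alpha$ is therefore unique and is an isomorphism. The main obstacle will be the rigorous extension of $\Psi(\phi)$ from free modules to all of $\M(A)$: in the classical case this was handled effortlessly by the universal property of $\widehat{A}$, but here one must either invoke the corresponding monadic universal property (that $\M(A)$ is the free cocomplete tensor category generated by $A$) or unwind the construction via bar resolutions and verify functoriality and independence of presentation by hand.
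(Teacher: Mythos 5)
Your forward direction and your definition of $\Psi(\phi)$ on free modules coincide with the paper's (which likewise identifies $A\cong\END(|A|)$ via $\END(A)(n)=\Hom_{\M(A)}(A(1),A(n))\cong A(n)$ and uses functoriality of $\END$). However, the point you flag at the end as ``the main obstacle'' --- extending $\Psi(\phi)$ from free modules to all of $\M(A)$ --- is a genuine gap, and neither of your two proposed remedies closes it. Invoking ``the monadic universal property that $\M(A)$ is the free cocomplete tensor category generated by $A$'' is essentially assuming the statement to be proven: the paper explicitly sets out to give a \emph{direct} proof of this universal property precisely because it is not available off the shelf for generalized rings (one cannot fall back on the enriched presheaf cocompletion $\widehat{A}$ as in Proposition \ref{modtens}, since $\M(A)$ for an algebraic monad is not a free cocompletion in that sense). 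The alternative --- extending along bar resolutions and checking independence of presentation and functoriality by hand --- is exactly the work you have not done, and it is not trivial: one must verify that the value on a coequalizer of free modules does not depend on the chosen presentation and that the assignment is natural in module maps that are not induced by maps of presentations.

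The paper's key idea, which is absent from your proposal, is to sidestep this entirely by constructing the \emph{right} adjoint first: the homomorphism $\alpha:A\to\END(1_C)$ directly equips each set $\Hom(1_C,X)$ with an $A$-module structure, giving an unproblematic continuous functor $\Hom(1_C,-):C\to\M(A)$. One then checks that the prescription $1_C\otimes_A A(X):=1_C^{\oplus X}$ on free modules satisfies the adjunction isomorphism there, and extends to a left adjoint on all of $\M(A)$ using that every module is a coequalizer of maps between free modules; as the paper notes, ``because of the defining adjunction, we do not have to check functoriality of $1_C\otimes_A{?}$ on all $A$-modules'' --- well-definedness and naturality come for free from the universal property of the adjunction. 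The adjunction also produces the oplax tensor structure on $1_C\otimes_A{?}$ from the evident lax structure on $\Hom(1_C,-)$, after which strongness reduces by cocontinuity to the case $M=N=|A|$. You should restructure your construction of $\Psi$ around this adjoint pair; your discreteness argument for $\Hom_{c\otimes}(\M(A),C)$ (a tensor natural transformation is pinned down at $|A|$ and propagated by cocontinuity) is fine as stated.
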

 
\begin{proof}
In the category of $A$-modules $\M(A)$ the unit is $|A|$, the free module of rank $1$, and the generalized ring $\END(|A|)$ turns out to be isomorphic to $A$:
\[\END(A)(n) = \Hom_{\M(A)}(|A|,|A|^{\oplus n}) = \Hom_{\M(A)}(A(1),A(n)) \cong A(n)\]
Since $\END$ is functorial, every cocontinuous tensor functor $\M(A) \to C$ thus induces a homomorphism of generalized rings $A \cong \END(|A|) \to \END(1_C)$.

Conversely, let $\alpha : A \to \END(1_C)$ be a homomorphism of generalized rings. Then for every $n$ we have a map $\alpha(n) : A(n) \to \End(1_C,1_C^{\oplus n})$. For $X \in C$ this endows the set $\Hom(1_C,X)$ with an $A$-module structure: For $t \in A(n)$ and $x_1,\dotsc,x_n : 1_C \to X$ define $t(x_1,\dotsc,x_n) : 1_C \to X$ to be the composition of $\alpha(n)(t)$ with the morphism $(x_1,\dotsc,x_n) : 1_C^{\oplus n} \to X$. We thus obtain a functor $\Hom(1_C,-) : C \to \M(A)$, which is clearly continuous. We claim that it has a left adjoint $1_C \otimes_A ?$.

On free modules, let $1_C \otimes_A A(X) := 1_C^{\oplus X}$. For a homomorphism of $A$-modules $\sigma : A(X) \to A(Y)$ we obtain a morphism $1_C^{\oplus X} \to 1_C^{\oplus Y}$ as follows: Since $A(X)$ is the free $A$-module on $X$, $\sigma$ corresponds to elements $t_x \in A(Y)$ for $x \in X$. Now apply $\alpha(Y)$ to get $1_C \to 1_C^{\oplus Y}$ for every $x$, i.e. a morphism $1_C^{\oplus X} \to 1_C^{\oplus Y}$. Then one easily checks that $1_C \otimes_A ?$ is a well-defined functor on the category of free $A$-modules. Now it is clear how to extend $1_C \otimes_A ?$ to arbitrary $A$-modules, since every $A$-module is the coequalizer of two maps between free $A$-modules (\cite[3.3.20]{Du}). Because of the defining adjunction, we do not have to check functoriality of $1_C \otimes_A ?$ on all $A$-modules.

Since $1_C \otimes_A ?$ is left adjoint, it is cocontinuous. Also, $1_C \otimes_A |A| = 1_C$ by construction, so that it preserves the units. In order to show that it preserves the tensor products, observe that $\Hom(1_C,-)$ is a lax tensor functor, which means that there are natural homomorphisms of $A$-modules
\[\Hom(1_C,X) \otimes \Hom(1_C,Y) \to \Hom(1_C,X \otimes Y),\]
which are induced by the tensor product in $C$ and $1_C \otimes 1_C \cong 1_C$. Then the adjunction endows $1_C \otimes_A ?$ with the structure of an oplax tensor functor, which means that there are natural morphisms
\[1_C \otimes_A (M \otimes_A N) \to (1_C \otimes_A M) \otimes (1_C \otimes_A N).\]
In order to show that this is an isomorphism for all $A$-modules $M,N$, a colimit argument reduces to the case $M=N=|A|$; but then it is trivial.
 
Thus, $1_C \otimes_A ?$ is a cocontinuous tensor functor. Now it is a routine exercise to show that the functors between $\Hom_{c\otimes}(\M(A),C)$ and $\Hom(A,\END(1_C))$, which we have just defined on objects, are in fact quasi inverse functors.
\end{proof}

We immediately deduce Corollary \ref{afftens} for generalized schemes, which means that we are done with affine schemes.

Of course, no modification of the category theoretic part of section 2.3 is necessary. We have already discussed that \cite[6.7.1]{EGAI} generalizes to the present setting, so Example \ref{directimage} still works. What about the notion of an immersion $i : Y \hookrightarrow Z$ of generalized schemes? Durov suggests a rather categorical one in \cite[6.5.21]{Du}, whose properties remain unclear. However, we only need open immersions, which behave as usual (\cite[6.5.7]{Du}), and closed immersions (\cite[6.5.23]{Du}), which are defined as follows:
 
A morphism of schemes $i : Y \to X$ is a \textit{closed immersion} if $i^\# : \O_X \to i_* \O_Y$ is a surjective homomorphism of sheaves of generalized rings. It follows that closed subschemes of $X$ correspond to strict quotients of $\O_X$. If $J$ denotes the kernel of $i^\#$, then $J$ is a generalized quasi-coherent ideal of $\O_X$, and there is a surjective homomorphism $\O_X/J \to i_* \O_Y$; it does not, however, have to be an isomorphism. This is also the reason why closed immersions don't have a closed image in general (\cite[6.5.19]{Du}). However, we only need to consider those closed immersions for which $\O_X/J \cong i_* \O_Y$ and $\O_X/J$ is a \textit{unary} $\O_X$-algebra (\cite[5.1.15, 6.5.14]{Du}).
 
These may be described as follows: Let $I \subseteq |\O_X|$ be a quasi-coherent ideal. The corresponding closed immersion $i : \Spec(\O_X/I) \hookrightarrow X$ is affine and in the case $X=\Spec(A)$ is induced by the canonical projection $A \to A/I$, where $I \subseteq |A|$ is an ideal and $A/I$ is defined by the usual universal property. Under the correspondence between unary $A$-algebras and algebra objects in $\M(A)$ (\cite[5.3.8]{Du}), the quotient $A/I$ corresponds to $|A|/I$. 
 
In the following, we only refer to closed immersions of this type. Now Corollaries \ref{immersion} and \ref{reduction} carry over to generalized schemes.

In the proof of Proposition \ref{fullfaith} we need for a generalized scheme $Y$ and a point $y \in Y$ the canonical morphism $\Spec(\O_{Y,y}) \to Y$. This is constructed as in \cite[2.5.1]{EGAI}: Reduce to $X = \Spec(A)$, in which case $y$ corresponds to a prime ideal $\mathfrak{p} \subseteq |A|$. We choose the morphism corresponding to the homomorphism of generalized rings $A \to A_{\mathfrak{p}}$ which is part of the defining universal property of this localization. In the next paragraph of the proof we need the following formula:
 
\begin{lemma}
For a morphism of generalized schemes $f : Y \to X$ and a quasi-coherent ideal $I \subseteq |\O_X|$ we have
\[\supp f^*(|\O_X|/I) = f^{-1}(\supp(|\O_X|/I)).\]
\end{lemma}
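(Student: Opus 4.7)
The plan is to reduce the claim to a pointwise statement and then exploit the fact that $|\O_X|/I$ is (locally) a cyclic module, generated by the class of $1$, so that its vanishing at a point is controlled entirely by whether the relevant ideal contains a unit. The map on stalks being local will then propagate this dichotomy from $X$ to $Y$.

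First I would rewrite both sides as pointwise conditions on $y \in Y$. By definition of support and preimage, the equality to prove amounts to the following: for every $y \in Y$, with $x := f(y)$, the stalk $f^*(|\O_X|/I)_y$ is nonzero if and only if $(|\O_X|/I)_x$ is nonzero. Since pullback along $f$ composed with the canonical map $\Spec(\O_{Y,y}) \to Y$ is the same as pullback along the induced local homomorphism $\phi : \O_{X,x} \to \O_{Y,y}$ (these are the generalized-ring analogues of \cite[2.5.1]{EGAI} already invoked in the discussion preceding the lemma), and since pullback commutes with the cokernel that defines $|\O_X|/I$, the stalk is computed as
\[
f^*(|\O_X|/I)_y \;\cong\; |\O_{Y,y}|\big/\bigl(\phi(I_x)\cdot |\O_{Y,y}|\bigr),
\]
where $I_x \subseteq |\O_{X,x}|$ is the stalk of the ideal $I$ at $x$ and $\phi(I_x)\cdot|\O_{Y,y}|$ denotes the ideal generated by the image of $I_x$.

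Second, I would run the argument that $\phi$ being local forces the two vanishing conditions to be equivalent. The local ring $\O_{X,x}$ has the property that an ideal $J\subseteq |\O_{X,x}|$ is proper if and only if it is contained in $\mathfrak{m}_x$; in particular $(|\O_X|/I)_x = |\O_{X,x}|/I_x$ vanishes exactly when $I_x \not\subseteq \mathfrak{m}_x$, i.e.\ when $1\in I_x$. Now if $1 \in I_x$, then $1 = \phi(1) \in \phi(I_x)\cdot|\O_{Y,y}|$, so the right-hand stalk is also zero. Conversely, if $I_x \subseteq \mathfrak{m}_x$, then locality of $\phi$ gives $\phi(I_x) \subseteq \mathfrak{m}_y$, hence $\phi(I_x)\cdot|\O_{Y,y}| \subseteq \mathfrak{m}_y$ is proper and the right-hand stalk is nonzero. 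This gives the desired equivalence $y \in \supp f^*(|\O_X|/I) \Leftrightarrow x\in \supp(|\O_X|/I)$.

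The only real obstacle is verifying that in Durov's framework the stalk formula and the cyclic quotient description behave exactly as in the classical case; but this was already implicitly used earlier in the paper and follows from the fact that taking stalks is a filtered colimit, pullback is cocontinuous and monoidal, and $|\O_X|/I$ is by definition the cokernel of $I \hookrightarrow |\O_X|$ in the pointed category $\Q(X)$. Once these are in hand, the argument above is essentially the same three-line check that works for ordinary schemes.
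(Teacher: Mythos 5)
Your argument is essentially the paper's: both reduce to the stalkwise statement for a local homomorphism of generalized local rings $\phi : \O_{X,x} \to \O_{Y,y}$ and settle it via the dichotomy between $1 \in I_x$ (both quotients vanish) and $I_x \subseteq \mathfrak{m}_x$ (locality of $\phi$ forces the target ideal into $\mathfrak{m}_y$). The one step where the paper is more careful is your inference that a proper ideal yields a nonzero quotient, which is not formal for generalized rings --- quotients here are coequalizers with the zero map and can collapse even for proper submodules --- so this should instead be routed, as the paper does, through the chain of epimorphisms $|B|/J \twoheadrightarrow |B|/\mathfrak{m}_B \neq 0$ onto the residue object, at both places where you invoke it.
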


\begin{proof}
We can use the same proof as in \cite[Chapitre 0, 5.2.4.1]{EGAI}: One reduces to the claim that for a local homomorphism of generalized local rings (\cite[6.2.4]{Du}) $A \to B$ and an ideal $I \subseteq |A|$, one has $|A|/I \otimes_A B = 0$ if and only if one has $|A|/I=0$. Therefore, let us assume $|A|/I \neq 0$. Then $I \subseteq \mathfrak{m}_A$, and we have two epimorphisms $|A|/I \otimes_A B \twoheadrightarrow |A|/\mathfrak{m}_A \otimes_A B \twoheadrightarrow |B|/\mathfrak{m}_B \neq 0$. This proves that $|A|/I \otimes_A B$ is non-zero.
\end{proof}
 
The remainder of sections 3.1 and 3.2 also carries over directly to generalized schemes. In section 3.3 every occurrence of ``epimorphism'' has to be replaced by ``regular epimorphism'', which just means a surjective homomorphism. Recall that not every epimorphism of $A$-modules, where $A$ is a generalized ring, is surjective: The modules over $\mathds{Z}_{\geq 0}$ are precisely the commutative monoids (\cite[3.4.12 (a)]{Du}) and localizations such as $\mathds{N} \to \mathds{Z}$ provide non-surjective epimorphisms of commutative monoids. Apart from that, we don't have to modify section 3.3. Hence, every generalized local ring is good.
 
In Proposition \ref{map} we need the universal property of the canonical morphism $\Spec(\O_{Y,y}) \to Y$ in the context of generalized schemes, whose proof can be copied from \cite[2.5.3]{EGAI}. The argument at the end of the proof of Proposition \ref{map} has already been repeated above, in our discussion on Proposition \ref{fullfaith}. Finally, Theorems \ref{good} and \ref{main} follow as before.

Hence, we have proven:

\begin{thm} \label{main2}
Let $X,Y$ be generalized schemes over $\mathds{F}_1$, with $X$ quasi-compact and quasi-separated. Then, the functor $f \mapsto f^*$ implements an equivalence
\[\Hom(Y,X)\simeq \Hom_{c\otimes}\bigl(\Q(X),\Q(Y)\bigr).\]
\end{thm}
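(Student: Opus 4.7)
The plan is to assemble the ingredients developed through the course of this section, which collectively amount to a translation of the proofs of Sections~2 and~3 into the setting of generalized $\mathbb{F}_1$-schemes. The proof splits into full faithfulness and essential surjectivity (cf. Remark \ref{remgood}), so the heart of the matter is to verify that the five-step strategy of Section~3 survives the change of setting.

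First, I would invoke the affine case: the analogue of Corollary \ref{afftens} for generalized schemes is immediate from Proposition \ref{modtens2}, together with the identification of morphisms $Y \to \Spec(A)$ with generalized ring homomorphisms $A \to \END(|\O_Y|)$. Coupled with the immersion-locality machinery of Section~2.3, which is purely category-theoretic and thus carries over once one restricts to the permissible class of closed immersions identified earlier (those corresponding to quasi-coherent ideals with $\O_X/J \cong i_*\O_Y$ a unary algebra) together with quasi-compact open immersions, this gives the affine reduction step, i.e. the analogue of Corollary \ref{reduction}.

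Next, I would run the full-faithfulness argument of Proposition \ref{fullfaith} essentially verbatim: check stalkwise, reduce to $Y$ the spectrum of a generalized local ring, establish that $f(y)$ specializes to $g(y)$ by comparing the supports of $f^*(|\O_X|/I)$ and $g^*(|\O_X|/I)$ using the displayed lemma proved just above, find an affine open neighborhood of $f(y)$ through which both $f$ and $g$ factor, and conclude by the affine case. Essential surjectivity then proceeds in two stages: show that every generalized local ring is good (the analogue of Theorem \ref{local}), and bootstrap to arbitrary $Y$ (the analogue of Theorem \ref{good}) by constructing a continuous underlying map point-by-point as in Proposition \ref{map} and invoking the Zariski-local nature of goodness to reduce to the case where this map factors through an affine open $U \hookrightarrow X$, so that $U$-locality of $F$ can be verified stalkwise from the local-ring case.

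The main obstacle, already flagged in the text, is the local-ring step. Throughout Section~3.3, ``epimorphism'' must be replaced consistently by ``regular epimorphism'', i.e. surjection, since in categories of modules over a generalized ring---for instance commutative monoids over $\mathbb{F}_1$, with $\mathbb{N} \to \mathbb{Z}$ as a witness---epimorphisms need not be surjective. One then checks that Lemmas \ref{ideal}, \ref{weak}, \ref{pairing}, and \ref{iso} remain valid under this restriction: in Lemma \ref{ideal} the construction of a unit still forces a surjection to become an isomorphism; in Lemma \ref{weak} the sums-of-ideals contradiction uses only that the images generate; Lemma \ref{pairing} goes through because finite-type quasi-coherent modules are locally quotients of free modules; and the filtered colimit of quasi-coherent submodules of finite type needed in Lemma \ref{iso} is available since the generalization of \cite[6.9.9]{EGAI} discussed above holds, so that the epi/mono factorization and the split mono/split epi diagrams continue to force an isomorphism. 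Once these routine adjustments are made, Proposition \ref{map} and Theorem \ref{good} follow by the same argument as in the classical setting, yielding Theorem \ref{main2}.
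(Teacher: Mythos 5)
Your proposal is correct and follows essentially the same route as the paper: reduce to the affine case via the universal property of $\M(A)$ expressed through the generalized ring $\END(1_C)$ (Proposition \ref{modtens2}), carry over the immersion-locality machinery for the restricted class of closed immersions, rerun the full-faithfulness argument using the support lemma, and handle essential surjectivity by first treating generalized local rings (with ``epimorphism'' systematically replaced by ``regular epimorphism'' in Section~3.3) and then globalizing as in Proposition \ref{map} and Theorem \ref{good}. The paper's own treatment in Section~4.2 consists of exactly these verifications, so no substantive difference or gap is present.
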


This actually contains our previous Theorem \ref{main} as a special case. It can also be applied to monoid schemes in the sense of \cite[Section 3]{CoCo}; see \cite[2.5]{PeLo} for the interpretation of monoid schemes as generalized schemes with zero.



\begin{thebibliography}{CHW}

\bibitem[Br1]{Br} M. Brandenburg, \emph{Tensorial schemes}, Preprint, \href{http://arxiv.org/abs/1110.6523v1}{arXiv:1110.6523v1}, 2011 
\bibitem[Br2]{Br2} M. Brandenburg, \emph{Rosenberg's Reconstruction Theorem (after Gabber)}, Preprint, \href{http://arxiv.org/abs/1310.5978v3}{arXiv:1310.5978v3}, 2013
\bibitem[CC]{CoCo} A. Connes, C. Consani, \textit{Schemes over $\mathds{F}_1$ and Zeta functions}, Preprint. \href{http://arxiv.org/abs/0903.2024v3}{arXiv:0903.2024v3}, 2009
\bibitem[CHW]{T} G. Corti\~{n}as, C. Haesemeyer, M. Walker, C. A. Weibel, \textit{Toric varieties, monoid schemes and $cdh$ descent}, Preprint, \href{http://arxiv.org/abs/1106.1389v1}{arXiv:1106.1389v1}, 2011
\bibitem[CJ]{ChJo} A. Chirvasitu, T. Johnson-Freyd, \textit{The fundamental pro-groupoid of an affine $2$-scheme}, Preprint, \href{http://arxiv.org/abs/1105.3104v4}{arXiv:1105.3104v4}, 2011
\bibitem[Dur]{Du} N. Durov, \textit{New Approach to Arakelov geometry}, Preprint,\\ \href{http://arxiv.org/abs/0704.2030}{arXiv:0704.2030v1}, 2007
\bibitem[Fre]{Fre} J. Fresan, \textit{Compactification projective de $\Spec(\mathds{Z})$ (d'apres Durov)}, Preprint, \href{http://arxiv.org/abs/0908.4059v1}{arXiv:0908.4059v1}, 2009
\bibitem[Gab]{Gab} P. Gabriel, \emph{Des cat\'{e}gories abeli\'{e}nnes}, Bull. Soc. Math. France 90, 1962, pp. 323--448.
\bibitem[GD]{EGAI} A. Grothendieck, J. Dieudonn\'{e}, \textit{\'{E}l\'{e}ments de g\'{e}om\'{e}trie alg\'{e}brique (r\'{e}dig\'{e}s avec la collaboration de Jean Dieudonn\'{e}): I. Le langage des sch\'{e}mas}, Grundlehren der Mathematischen Wissenschaften 166, Springer-Verlag, 2nd edition, 1971
\bibitem[Gil]{Gil} W. D. Gillam, \emph{Localization of ringed spaces}, Preprint, \href{http://arxiv.org/abs/1103.2139}{arXiv:1103.2139}, 2011
\bibitem[IK]{ImKe} G. B. Im, G. M. Kelly, \textit{A universal property of the convolution monoidal structure}, J. Pure Appl. Algebra, 43, 1986, No. 1, pp. 75--88
\bibitem[Ke1]{Kel2} G. M. Kelly, \textit{Doctrinal Adjunction}, Lecture Notes in Mathematics, Volume 420, 1974, pp. 257--280
\bibitem[Ke2]{Kel1} G. M. Kelly, \textit{Basic Concepts of Enriched Category Theory}, Reprints in Theory and Applications of Categories, No. 10, 2005, pp. 1--136
\bibitem[Lur]{Lur} J. Lurie, \textit{Tannaka Duality for Geometric Stacks}, Preprint,\\\href{http://arxiv.org/abs/math/0412266}{arXiv:math/0412266}, 2005
\bibitem[Mac]{Mac} S. Mac Lane, \textit{Categories for the Working Mathematician}, Second Edition, Graduate Texts in Mathematics, 5, Springer, 1998
\bibitem[PL]{PeLo} J. L. Pe\~{n}a, O. Lorscheid, \textit{Mapping $\mathds{F}_1$-land: An overview of geometries over the field with one element}, Preprint, \href{http://arxiv.org/abs/0909.0069}{arXiv:0909.0069v1}, 2009
\bibitem[Ros]{Ros} A. Rosenberg, \textit{Spectra of 'spaces' represented by abelian categories}, MPI Preprints Series 115, 2004.
\bibitem[SaR]{SaR} N. Saavedra-Rivano, \textit{Cat\'{e}gories Tannakiennes}, Lectures Notes in Mathematics, Vol. 265, Springer-Verlag, 1972
\bibitem[Sch1]{Sch1} D. Sch\"appi, \emph{A characterization of categories of coherent sheaves of certain algebraic stacks}, Preprint, \href{http://arxiv.org/abs/1206.2764}{arXiv:1206.2764}, 2012
\bibitem[Sch2]{Sch2} D. Sch\"appi, \emph{Ind-abelian categories and quasi-coherent sheaves}, Preprint, \href{http://arxiv.org/abs/1211.3678}{arXiv:1211.3678}, 2012
\bibitem[Str]{St} R. Street, \textit{Categorical structures}, Handbook of algebra, Vol. 1, pp. 529--577, North-Holland, 1996
\bibitem[Vez]{Vez} A. Vezzani, \textit{On the geometry over the field with one element}, Master Thesis, 2010, available at \href{http://users.mat.unimi.it/users/vezzani}{http://users.mat.unimi.it/users/vezzani}
\bibitem[Vis1]{V2} A. Vistoli, \textit{Intersection theory on algebraic stacks and on their moduli spaces}, Invent. math. 97, 1989, pp. 613--670
\bibitem[Vis2]{Vis} A. Vistoli, \textit{Notes on Grothendieck topologies, fibered categories and descent theory}, Preprint, \href{http://arxiv.org/abs/math/0412512}{arXiv:math/0412512v4}, 2007

\end{thebibliography}
\end{document}